\documentclass[11pt,a4paper,reqno]{amsart}
\usepackage[latin1]{inputenc}
\usepackage{amsmath,hyperref}
\usepackage{amssymb} 
\usepackage[a4paper, left=3cm, right=3cm, bottom=2cm]{geometry}
\usepackage{amsfonts, bm, verbatim,tikz}
\usepackage[all,cmtip]{xy}
\usepackage{bbold}
\usepackage{cleveref}

\theoremstyle{plain}
\newtheorem{theorem}{Theorem}[section]
\newtheorem{corollary}[theorem]{Corollary}
\newtheorem{lemma}[theorem]{Lemma}

\newtheorem{prop}[theorem]{Proposition}
\newtheorem{Scholium}[theorem]{Scholium}

\theoremstyle{definition}
\newtheorem{defn}[theorem]{Definition}
\newtheorem{example}[theorem]{Example}

\theoremstyle{remark}

\newcommand{\F}{\mathbb F}

\newcommand{\nc}{\newcommand}

\nc{\thm}{\theorem}
\nc{\cor}{\corollary}
\nc{\mc}{\mathcal}
\nc{\mb}{\mathbb}
\nc{\mf}{\mathfrak}
\nc{\ul}{\underline}
\nc{\ol}{\overline}
\nc{\N}{\mb N}
\nc{\R}{\mb R}
\nc{\Z}{\mb Z}
\nc{\Q}{\mb Q}

\nc{\gm}{\gamma}
\nc{\Hom}{Hom}
\nc{\SO}{SO}
\nc{\Spin}{Spin}
\nc{\lip}{\langle}
\nc{\rip}{\rangle}
\nc{\eps}{\varepsilon}
\nc{\la}{\lambda}
\nc{\vt}{\vartheta}
\nc{\half}{\frac{1}{2}}
\nc{\beq}{\begin{equation*}}
\nc{\eeq}{\end{equation*}}

\DeclareMathOperator{\sgn}{sgn}

\DeclareMathOperator{\Sq}{Sq}

\nc{\dmo}{\DeclareMathOperator}
\dmo{\ccup}{cup}
\dmo{\HH}{H}
\dmo{\Sym}{Sym}
\dmo{\HHom}{Hom}
\dmo{\Res}{Res}
\dmo{\im}{im}
\dmo{\Perm}{Perm}
\dmo{\rank}{rank}
\dmo{\GL}{GL}
\dmo{\st}{st}
\dmo{\ord}{ord}
\dmo{\Syl}{Syl}
\dmo{\topp}{top}
\dmo{\Irr}{Irr}
\dmo{\OIR}{OIR}
\dmo{\IOR}{IOR}
\title{Stiefel-Whitney classes for Symmetric Groups}
\author{Sujeet Bhalerao, Jyotirmoy Ganguly, Steven Spallone}

\address{Department of Mathematics, University of Illinois at Urbana-Champaign, Urbana, IL, USA} \email{sujeetbhalerao@gmail.com}

\address{GITAM University,  Bengaluru, NH 207, Nagadenahalli, Doddaballapura Taluk, Bengaluru Rural, Karnataka - 562163, India}
 \email{jyotirmoy.math@gmail.com}  

\address{Steven Spallone, Indian Institute of Science Education and Research, Pune-411008, Maharashtra, India}
\email{sspallone@gmail.com}

\makeatletter\@addtoreset{chapter}{part}\makeatother%

\begin{document}
 \vspace*{-2cm}	
\maketitle

  \begin{abstract} We prove several results about Stiefel-Whitney Classes (SWCs) $w_k(\pi)$ of representations $\pi$ of $S_n$. First, each SWC  is polynomial in the character values of $\pi$ at involutions. Next, for a fixed   $k$, the proportion of irreducible $\pi$ for which $w_k(\pi)=0$  approaches $100\%$ as $n \xrightarrow[]{} \infty$.
A similar result holds for the top SWCs.
We also provide a simple criterion which determines the first nonvanishing SWC for a representation.
The first four SWCs are computed explicitly. Finally, we give analogues for alternating groups.
   \end{abstract}

\tableofcontents

\section{Introduction}
Characteristic classes are the fundamental invariants of vector bundles. The most important characteristic classes for real vector bundles $\mc V$ are the Stiefel-Whitney Classes (SWCs) $w_k(\mc V)$, for nonnegative integers $k$.  To a real representation $\pi$ of a group $G$, one may associate a vector bundle $\mc V_\pi$ over a classifying space of $G$. As in \cite{Benson}, one defines $w_k(\pi)=w_k(\mc V_\pi)$ and calls these the SWCs of $\pi$. 

Initial work on the second SWC for representations of $S_n$ was carried out in \cite{GS}. Many formulas for SWCs for certain finite groups of Lie type can be found in (\cite{MS23}, \cite{MS25}, \cite{MS26},  \cite{GJ23}).  See also \cite{anwesh} for some asymptotics of SWCs for $\GL(n,q)$.  Chern classes and SWCs for connected reductive Lie groups are studied in \cite{Joshi.Lie}. See  \cite{BJM25} for SWCs of dihedral groups.

This paper focuses primarily on SWCs for representations $\pi$ of the symmetric group $S_n$. We prove that each $w_k(\pi)$ can be expressed as a polynomial in the character values of $\pi$ at involutions. For example,  $w_1(\pi) \in \HH^1(S_n) \cong \HHom(S_n,\Z/2\Z)$ gives the determinant of $\pi$, and so $w_1(\pi)=\frac{\chi(1)-\chi(\tau)}{2} v$, where $\tau$ is a transposition and $v$ is the nonzero member of $\HH^1(S_n)$. 
Our main tool is the reduction of the computation of $w_k$ from $S_n$ to elementary abelian $2$-subgroups (EA2Gs) of $S_{2k}$. This is done by combining cohomology detection results of B. M. Mann with the work of Nakaoka. (See Theorems \ref{Es.detect}  and \ref{thm: nakaoka-stability}.)

When these character values are divisible by sufficiently high powers of $2$, the SWCs will vanish since they take values in mod $2$ cohomology. But \cite{GPS} shows that irreducible characters are generically divisible by any positive integer. This leads to our first theorem.
Let $\Lambda_n$ be the partitions of $n$, and put $p(n)=|\Lambda_n|$. Recall that the irreducible representations of $S_n$ are parametrized as $\pi_\la$ for $\la \in \Lambda_n$.
\begin{theorem} \label{thm1}
For a positive integer $k$, we have
\[
        \lim_{n\to\infty}
        \frac{
        \#\{\lambda \in \Lambda_n : w_k(\pi_\lambda)=0\}
        }{p(n)}
        =1.
\]
\end{theorem}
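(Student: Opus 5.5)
The plan is to reduce $w_k$ to finitely many elementary abelian $2$-subgroups, as the introduction indicates. Combining Mann's detection theorem with Nakaoka stability (Theorems \ref{Es.detect} and \ref{thm: nakaoka-stability}), $w_k(\pi)$ is determined by the restrictions $\Res^{S_n}_E w_k(\pi)=w_k(\pi|_E)$ as $E$ runs over representatives of the EA2Gs of $S_n$ that live in $S_{2k}\times S_{n-2k}$. We may take these to be of the form $E'\times\langle\text{involutions fixing points}\rangle$; more precisely, there is a finite list $E_1,\dots,E_N$ of EA2Gs, depending only on $k$ and not on $n$, whose restriction maps on $\HH^k$ jointly detect. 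So it suffices to show that for each fixed $E=E_i$, the class $w_k(\pi_\la|_E)$ vanishes for a proportion of $\la$ tending to $1$. Intersecting finitely many such density-one sets then gives the theorem.

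For a fixed EA2G $E\cong(\Z/2)^r$, the restriction $\pi|_E$ splits as $\bigoplus_{\psi}m_\psi\,\psi$ over the $2^r$ linear characters $\psi$ of $E$. The total class is
\[
w(\pi|_E)=\prod_\psi(1+x_\psi)^{m_\psi},
\]
where $x_\psi=w_1(\psi)\in\HH^1(E)$. The multiplicities are given by
\[
m_\psi=\frac{1}{2^r}\sum_{e\in E}\chi_\pi(e)\psi(e),
\]
so they are linear combinations of the character values of $\pi$ at the involutions (and identity) of $E$. Over $\F_2$, the degree-$k$ part of $(1+x)^{m}$ is $\binom{m}{j}x^j$ with $j\le k$. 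By Lucas' theorem, $\binom{m}{j}\equiv 0\pmod 2$ for all $1\le j\le k$ whenever $2^{s}\mid m$ with $2^s>k$. Hence if every $m_\psi$ is divisible by $2^{s}$, then $w_j(\pi|_E)=0$ for $1\le j\le k$. This holds in particular if all $\chi_\pi(e)$, $e\in E$, are divisible by $2^{r+s}$.

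The remaining input is \cite{GPS}: for a fixed conjugacy class type of bounded support and any fixed integer $M$, the proportion of $\la\in\Lambda_n$ with $M\mid\chi_\la(g)$ tends to $1$. We also need $M\mid\chi_\la(1)$, which is the same statement for the identity. The elements of $E$ are involutions moving at most $2k$ points, so there are finitely many cycle types, of the form $2^a1^{n-2a}$ with $a\le k$. Applying \cite{GPS} to each cycle type with $M=2^{r+s}$ and intersecting finitely many sets of density one finishes the proof.

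The main obstacle is the first step: we must make sure the detecting family is genuinely finite and independent of $n$, that is, that the relevant EA2Gs in degree $k$ come from $S_{2k}$ via stability. We also need the \cite{GPS} result in the form quoted, where the conjugacy classes have cycle type with fixed non-trivial part while $n$ grows. If \cite{GPS} is stated only for classes of bounded support, this matches exactly.
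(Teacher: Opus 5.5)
Your argument is correct and is essentially the paper's proof. Composing Nakaoka's isomorphism $\HH^k(S_n)\cong \HH^k(S_{2k})$ with Mann's detection theorem for $S_{2k}$ gives exactly the finite, $n$-independent detecting family you want, namely the EA2Gs of $S_{2k}\le S_n$ with no extra factor needed; so the ``main obstacle'' you flag is already settled by Theorems \ref{Es.detect} and \ref{thm: nakaoka-stability}, and \cite{GPS} is then applied to the finitely many classes of $\iota_0,\ldots,\iota_k$. The only difference is bookkeeping: you check vanishing on each $E$ directly from the multiplicities via Lucas' theorem, with the explicit modulus $2^{r+s}$ where $2^s>k$, whereas the paper routes this through the rational polynomials $q_\vt$ of Proposition \ref{chf1} and clears denominators to get the modulus $2D$.
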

We also apply work of \cite{law2021plethysms} to prove the same (Proposition \ref{top.swc.usually}) about the vanishing of top SWCs, meaning $w_k(\pi)$ when $k=\deg \pi$.   
This result and Theorem \ref{thm1} have analogues for the alternating groups $A_n$; we work this out in Section \ref{section: alternating}.

In addition to asymptotics, we also compute the first nonvanishing SWC. It is easy to see (Proposition \ref{motivational}) that $w(\pi)$ is trivial iff $\pi$ is trivial iff $\chi_\pi(\tau)=\deg \pi$, for a transposition $\tau$. Define $\ord(\pi)$ to be the maximum $k$ so that  $2^{k+1}$ divides $\deg \pi-\chi(\iota)$ for all involutions $\iota \in S_n$ with at most $2^k$ transpositions.   

\begin{thm} \label{thm2} For a nontrivial representation $\pi$ of $S_n$, and $f=\ord(\pi)$, we have $w_i(\pi)=0$ for $1 \leq i<2^f$, and  $w_{2^{f}}(\pi) \neq 0$.
\end{thm}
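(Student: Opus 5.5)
The plan is to reduce everything to elementary abelian $2$-subgroups, as the introduction indicates. By Mann's detection result (Theorem \ref{Es.detect}), $\HH^*(S_n;\Z/2\Z)$ is detected on the maximal EA2Gs of $S_n$. By Nakaoka stability (Theorem \ref{thm: nakaoka-stability}), classes of degree $\le k$ are detected after restriction to $S_{2k}\times S_{n-2k}$, and hence on EA2Gs of $S_{2k}$. So it suffices to understand the total SWC of $\pi|_E$ for elementary abelian $E\le S_n$. Since $\pi|_E$ is a sum of real characters $\psi$, we have $w(\pi|_E)=\prod_{\psi}(1+\psi)$, with $\psi\in\HH^1(E)=\hat E$ counted with multiplicity $m_\psi$. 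Because $w$ is multiplicative, this equals $\prod_{\psi\ne 1}(1+\psi)^{m_\psi}$. The multiplicities $m_\psi$ are given by Fourier inversion from the values $\chi(e)$, $e\in E$. Each $e$ is an involution, and when $E\le S_{2k}$ it is a product of at most $k$ transpositions.

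\emph{Vanishing below $2^f$.} Fix $i<2^f$ and an EA2G $E\le S_{2i}$, embedded in $S_n$. Its nonidentity elements are involutions with at most $i<2^f$ transpositions. So $2^{f+1}$ divides $\deg\pi-\chi(e)$ for all $e\in E$. I will show that $(1+\psi)^{m_\psi}$, over the relevant $\psi$, has no terms in degrees $1,\dots,2^f-1$. Write $\pi|_E=\deg\pi\cdot 1+\sum_\psi m_\psi(\psi-1)$ in the representation ring. A cleaner route is to compare $\pi|_E$ with $\deg\pi$ copies of the trivial representation: the virtual character $\deg\pi-\chi$ on $E$ takes values in $2^{f+1}\Z$ and vanishes at $1$. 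I claim such a virtual character is a $\Z$-combination of the $2^f\cdot(\text{regular-type})$ elements $2^{f}\sum_{\psi\in\hat E}\ldots$. Concretely, the multiplicity $m_\psi=\frac{1}{|E|}\sum_e(\chi(e)-\deg\pi)\psi(e)$ for $\psi\neq 1$. I plan to use the fact that $\sum_{\psi}(\psi-1)$ over characters nontrivial on a subgroup restricts nicely. The key lemma would be: if $m_\psi$ are integers such that the virtual class $\sum m_\psi(\psi-1)$ has character values divisible by $2^{f+1}$, then $\prod(1+\psi)^{m_\psi}\equiv 1$ modulo degree $2^f$. I would prove this by induction on $\dim E$, using that $(1+\psi)^{2^f}=1+\psi^{2^f}$ in characteristic $2$. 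The idea is to decompose the virtual representation as a combination of $2^f$-fold multiples of inflations of reduced regular representations of quotients $E/E'$, each of which has trivial SWC below degree $2^{\dim E/E'}$ or gives a Dickson-invariant contribution. By Mann detection, the conclusion then holds in $\HH^i(S_n)$.

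\emph{Nonvanishing at $2^f$.} By maximality of $f$, there is an involution $\iota$ with at most $2^{f+1}$ transpositions such that $2^{f+2}\nmid \deg\pi-\chi(\iota)$. Among such $\iota$, pick one with the fewest transpositions, say $r$, with $2^f<r\le 2^{f+1}$ (for smaller $r$ the congruence is already known). I will restrict to a suitable EA2G of rank $f+1$ inside $S_{2^{f+1}}$. The natural choice is the regular embedding of $(\Z/2)^{f+1}$ acting on $2^{f+1}$ points, extended by fixed points, and possibly one of its variants containing $\iota$. I will then compute the degree-$2^f$ component of $\prod(1+\psi)^{m_\psi}$ and show that it is a nonzero multiple of a Dickson-type class: the product of all nonzero linear forms on a hyperplane, or the top Dickson invariant restricted appropriately. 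Its coefficient is $\frac{\deg\pi-\chi(\iota)}{2^{f+1}}$ mod $2$, or a similar odd combination, which is odd by choice of $\iota$.

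The main obstacle is this last computation: identifying the correct subgroup and Dickson class, and showing that the degree-$2^f$ term is exactly that mod-$2$ coefficient with no cancellation from the other $m_\psi$. This requires a careful Fourier computation over $\hat E$. I would try first the regular $(\Z/2)^{f+1}$, whose nonidentity elements are all products of $2^f$ transpositions. That case handles $r=2^f$, which may be all that is needed if the minimal failing involution always has exactly $2^f$ transpositions. I would verify this by the inductive structure of the previous step.
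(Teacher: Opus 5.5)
\textbf{The vanishing half.} This follows the paper's route. Nakaoka and Mann reduce to EA2Gs of $S_{2i}$, or of $S_n$ itself when $n<2i$, which your write-up omits. There every element is conjugate to some $\iota_j$ with $j\le i<2^f$, and what is then needed is exactly Corollary~\ref{schmabel}. Your sketch of that lemma is imprecise. The virtual character $\chi_\pi-\deg\pi$ need not be a combination of $2^f$-fold multiples of inflated reduced regular representations. For example, take $E=C_2^2$, $f=1$, with all three nontrivial multiplicities odd: then it is $\text{reg}_E-4$ plus twice something, and $\text{reg}_E-4$ is not twice anything. The paper handles this in two cases. When $f\ge r-1$ it uses the basis $\pi_e$ (Proposition~\ref{30s}, Theorem~\ref{summer}). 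When $f<r-1$ it replaces $\pi$ by $\pi^{\oplus 2^{r-f-1}}$ and extracts a $2^{r-f-1}$-th root, using that Frobenius is injective on $\HH^*(E)$.

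\textbf{The nonvanishing half has a genuine gap.} Two steps fail.
\begin{enumerate}
\item You claim a minimal failing involution has $2^f<r\le 2^{f+1}$ transpositions. This is false. The hypothesis $\ord(\pi)=f$ gives $\chi_\pi(\iota_r)\equiv\deg\pi \bmod 2^{f+1}$ for $r\le 2^f$, not mod $2^{f+2}$. For $\pi_{\st}$ we have $f=0$, the congruence mod $4$ fails at $\iota_1$, and it holds at $\iota_2$. So your range contains no failing involution at all.
\item The group you propose to try first, the regular $E_{f+1}<S_{2^{f+1}}$, can miss $w_{2^f}$ entirely. Take $\pi=\pi_{\st}^{\oplus 2}$ with $n\ge 4$. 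Then $f=1$ and $w_2(\pi)=w_1(\pi_{\st})^2\neq 0$. But $\pi_{\st}|_{E_2}$ is regular, so $w(\pi|_{E_2})=\mc D(E_2)^2=1+d_1(E_2)^2+d_2(E_2)^2$ and $w_2^{E_2}(\pi)=0$. The minimal failing involution here also has $1\ne 2^f$ transpositions.
\end{enumerate}
So the deferred Dickson computation cannot succeed as planned.

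\textbf{The missing idea.} No such computation is needed: restrict to $\langle\iota\rangle$ for a single involution $\iota$. There $w(\pi|_{\langle\iota\rangle})=(1+v)^m$ with $m=\frac{\deg\pi-\chi_\pi(\iota)}{2}$. Moreover, $\binom{m}{j}$ is even for all $0<j\le 2^f$ if and only if $2^{f+1}\mid m$. Suppose $w_{2^f}(\pi)=0$. Together with the first half, this gives $w_1(\pi)=\cdots=w_{2^f}(\pi)=0$. Hence $2^{f+2}\mid\deg\pi-\chi_\pi(\iota)$ for every involution $\iota$, i.e.\ $\ord(\pi)\ge f+1$, a contradiction. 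This is the paper's argument, via Proposition~\ref{c2.vanishing.order} and Corollary~\ref{binom.mod.2}.

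Your guessed coefficient $\frac{\deg\pi-\chi_\pi(\iota)}{2^{f+1}}\bmod 2$ is in fact right. It arises as $\binom{m}{2^f}$ in front of $v^{2^f}$ on the cyclic group, not in front of a Dickson class.
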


\bigskip

This is proved in Section \ref{sec: VOSG}. We also compute $w_{2^f}(\pi)$ explicitly; see Scholium \ref{first.nz.comp}. 
From these, we can in principle find character formulas for many SWCs. The character formula for $w_2(\pi)$ from \cite{GS} is rederived in Section \ref{w1and2}. In Section \ref{sec:w3} we give a character formula for $w_3(\pi)$, and in Section \ref{hereisw4} we compute $w_4(\pi)$.

\bigskip
   
\section{Notation and Preliminaries}   
  Let $G$ be a finite group. Until Section \ref{section: alternating},  all representations are understood to be on finite-dimensional real vector spaces.
If $\pi: G \to \GL_n(\R)$ is a representation, write $\det(\pi)$ for the composition $\det \circ \pi$. When $\det(\pi)$ is trivial, we say $\pi$ is \emph{achiral}, otherwise we say $\pi$ is \emph{chiral}.  Let $C_2$ be the  group of order $2$. We write EA2G for ``elementary abelian $2$-group''.
   
   Write $\Pi(G)$ for the set of isomorphism classes of representations of $G$. Given $\pi \in \Pi(G)$, we write $\chi_\pi$ for the character of $\pi$.    
   Let $\mb 1$ be the degree $1$ trivial representation of $G$.
   
   Let $G_1,G_2$ be groups and $(\pi_1,V_1)$ and $(\pi_2,V_2)$ representations of $G_1$ and $G_2$, respectively. Their \emph{external tensor product} $\Pi=\pi_1 \boxtimes \pi_2$ is the representation of $G_1 \times G_2$ on $V_1 \otimes V_2$ given by $\Pi(g_1,g_2)(v_1 \otimes v_2)=\pi_1(g_1)v_1 \otimes \pi_2(g_2)v_2$.

   Write $\HH^k(G)$ for the degree $k$ group cohomology of $G$ with coefficients in $\Z/2\Z$; it is a mod $2$ vector space.
   Recall (e.g., from \cite{Benson}) that for $\pi \in \Pi(G)$ one defines Stiefel-Whitney Classes (SWCs) $w_k(\pi)=w_k^G(\pi) \in \HH^k(G)$ for nonnegative integers $k$.  Note that $w_k(\pi)=0$ when $\deg \pi<k$. Write $w_{\topp}(\pi)$ for $w_k$ when $k=\deg \pi$; this is called the top SWC. When $\pi$ is achiral, it is the reduction mod $2$ of the Euler class of $\pi$.
   When $H$ is a subgroup of $G$, write $\pi|_H$ for the restriction of $\pi$ to $H$, and $w_k^H(\pi) \in \HH^k(H)$ for the SWC of $\pi|_H$.

 Let $S_n$ be the usual group of permutations of $\{1,2, \ldots, n\}$.  For each $0 \leq i \leq \lfloor n/2 \rfloor$, denote by $\iota_i$ the involution $(12) \cdots (2i-1\: \: 2i) \in S_n$.
   In particular, $\iota_0$ is the identity. Let $\pi_{\st}$ be the standard representation of $S_n$, i.e., by permutation matrices.

\section{Character Formulas}\label{CF}
 
\subsection{Terminology}

\begin{defn}\label{def1}
Let $g_1,\ldots, g_n$ be representatives for the conjugacy classes in $G$. We say $f:\Pi(G)\to\Z$ is a \emph{character formula}, when there exists a polynomial $p\in \Q[x_1,\ldots, x_n]$, whose constant term is $0$, so that $f(\pi)= p(\chi_{\pi}(g_1),\ldots,\chi_{\pi}(g_n))$ for all $\pi \in \Pi(G)$.
\end{defn}

\begin{example} \label{mult.char}
Fix an irreducible representation $\sigma$ of $G$.  For $\pi \in \Pi(G)$, write $m_\sigma(\pi)$ for the multiplicity of $\sigma$ in $\pi$. Since
\beq
m_\sigma(\pi)=\frac{1}{|G|}\sum_{g\in G}\chi_{\pi}(g){\ol{\chi_{\sigma}(g)}},
\eeq
 the function $m_\sigma$ has a character formula.
 \end{example}
 
\begin{defn}
Given $k \geq 1$ we say there is a \emph{character formula} for $w_k^G$, when for each linear functional $\vartheta: \HH^k(G) \to \Z/2\Z$, there is a character formula $f_{\vartheta}: \Pi(G) \to \Z$ so that for all representations $\pi$, we have
$\langle \vartheta, w_k(\pi) \rangle= f_{\vartheta}(\pi)\pmod 2$. 
\end{defn}

This amounts to expressing $w_k(\pi)$, in terms of a basis of $\HH^k(G)$, with coefficients which are (integer-valued) polynomials in the character values of $\pi$. Typically these polynomials  involve various multiplicities $m_\sigma(\pi)$.

\begin{example} Let $G=C_2$, and $\pi$ a representation of $G$. Let $v \in \HH^1(C_2)$ be the nonzero element. Then $w(\pi)=(1+v)^b$, where $b$ is the multiplicity of the nontrivial linear character of $C_2$ in $\pi$. Since $w_k(\pi)=\binom{b}{k}v^k$, we see $w_k$ has a character formula.
\end{example}

\subsection{Character Formulas for SWCs: EA2Gs}

\begin{lemma}\label{lem1} Let $E$ be an EA2G.
For each $k \geq 1$, there is a character formula for $w_k^E$.
\end{lemma}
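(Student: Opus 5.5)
Write $E \cong C_2^r$. Every real representation $\pi$ of $E$ decomposes as a direct sum of real linear characters $\chi: E \to \{\pm 1\}$. For $\chi \in \Hom(E,\{\pm1\})$, write $m_\chi(\pi)$ for its multiplicity; these characters are exactly the irreducible representations of $E$. The plan is to compute $w(\pi)$ explicitly via the Whitney sum formula and then observe that the resulting coefficients are polynomials in the multiplicities. Example \ref{mult.char} then converts these into character formulas.

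\emph{Step 1: total SWC.} Identify $\HH^*(E) \cong \F_2[v_1,\ldots,v_r]$, with $v_1,\dots,v_r$ a basis of $\HH^1(E)=\Hom(E,\Z/2\Z)$. Each linear character $\chi$ corresponds to a class $\ell_\chi \in \HH^1(E)$, a linear form in the $v_j$. We have $w(\chi)=1+\ell_\chi$, since $w_1$ of a line bundle is its determinant class. By the Whitney sum formula,
\[
w(\pi)=\prod_{\chi} (1+\ell_\chi)^{m_\chi(\pi)} .
\]

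\emph{Step 2: degree $k$ part.} Expand each factor as $\sum_{a\ge 0}\binom{m_\chi}{a}\ell_\chi^{a}$. The degree $k$ component is then
\[
w_k(\pi)=\sum_{(a_\chi):\ \sum a_\chi = k}\ \prod_\chi \binom{m_\chi(\pi)}{a_\chi}\, \ell_\chi^{a_\chi}.
\]
Each $\prod_\chi \ell_\chi^{a_\chi}$ is a fixed element of $\HH^k(E)$, independent of $\pi$, and the index set is finite.

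\emph{Step 3: pairing with a functional.} Fix a linear functional $\vartheta$ on $\HH^k(E)$. Put $c_{(a)} \in \{0,1\}$ for an integer lift of $\langle\vartheta,\prod_\chi \ell_\chi^{a_\chi}\rangle$. Then
\[
\langle\vartheta,w_k(\pi)\rangle \equiv f_\vartheta(\pi):=\sum_{(a)} c_{(a)}\prod_\chi \binom{m_\chi(\pi)}{a_\chi} \pmod 2 .
\]
Each $\binom{x}{a}=x(x-1)\cdots(x-a+1)/a!$ is a polynomial in $x$ with rational coefficients. It has zero constant term when $a\ge1$, and equals $1$ when $a=0$. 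Since $k\ge1$, every product contains at least one factor with $a_\chi\ge 1$, so each product has zero constant term as a polynomial in the $m_\chi$.

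By Example \ref{mult.char}, each $m_\chi(\pi)$ is a $\Q$-linear combination of the character values $\chi_\pi(g)$, with no constant term. Substituting these expressions gives a rational polynomial in the character values with zero constant term, whose value on every $\pi$ is the integer $f_\vartheta(\pi)$. This is exactly a character formula for $w_k^E$.

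The only point needing care is the constant-term condition in Definition \ref{def1}. It is handled by the observation that $k\ge1$ forces some $a_\chi\ge1$, together with the fact that the $m_\chi$ are linear in the character values. Everything else is routine bookkeeping.
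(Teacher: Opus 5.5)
Your proof is correct and follows essentially the same route as the paper. You decompose into linear characters, apply the Whitney sum formula to get $w(\pi)=\prod_\chi(1+\ell_\chi)^{m_\chi(\pi)}$, expand binomially, and use Example \ref{mult.char} to turn the multiplicities into character values; your explicit check of the zero-constant-term condition (using $k\ge 1$ and the linearity of each $m_\chi$ in the character values) is a detail the paper leaves implicit.
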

\begin{proof}

Let $n$ be the rank of $E$. Picking a basis gives a factorization $E \cong C_2^n$. Let $\mathcal{N}$ denote the set of subsets of $[n]=\{1,2,\ldots, n\}$.	
    Each representation $\varphi$ of $E$ has the form
    \begin{equation*}
\varphi=\bigoplus_{I\in \mathcal{N}}m_I\sigma_I,
    \end{equation*}	
    where $\sigma_{I}$ is the external tensor product representation $\rho_1\boxtimes\rho_2\boxtimes\cdots\boxtimes \rho_n$  of $E$ with factors
    \beq
    \rho_j=
    \begin{cases}
    \sgn,\quad \text{if $j\in I$},\\
    \mathbb{1},\quad \text{otherwise}.
    \end{cases}
    \eeq
    This gives 
    \begin{equation}\label{tsw}
    w(\varphi)=\prod_{I\in \mathcal{N}}(1+v_I)^{m_I},
    \end{equation}
    where $v_I=w_1(\sigma_I)$. 
By Example \ref{mult.char}, each $m_I$ has a character formula. Expanding  \eqref{tsw}, one obtains
$$w(\varphi)=\prod_{I\in \mathcal{N}}\left( \sum_{j=0}^{m_I} \binom{m_I}{j}v_I^j\right).$$
 This shows that  $w_k$, which is the  $k$-degree homogeneous part of this expression, has a character formula. 
\end{proof}

It will be convenient to rewrite this as follows:

\begin{cor} \label{conv.cor} For every $\vt \in \HH^k(E)^\vee$, there is a polynomial $p_\vt$ in $\ell=|E|$ variables so that   
$$\langle \vt_E, w_k(\pi) \rangle \equiv p_{\vt}(\chi_\pi(e_{1}),..., \chi_{\pi}(e_\ell))\pmod 2,$$
where $E=\{e_1,\ldots, e_\ell\}$. 
\end{cor}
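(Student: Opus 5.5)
The corollary is essentially a repackaging of Lemma \ref{lem1}, so the plan is to unwind that proof and record the resulting polynomial explicitly.

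Fix a basis of $E$, so $E\cong C_2^n$ and $\ell=2^n$. Choosing a basis $\{\beta\}$ of $\HH^k(E)$, write
\[
w_k(\pi)=\sum_\beta c_\beta(\pi)\,\beta .
\]
By the proof of Lemma \ref{lem1}, with $\pi|_E=\bigoplus_I m_I\sigma_I$, each coefficient $c_\beta(\pi)$ is the reduction mod $2$ of an integer-valued expression
\[
\sum \prod_{I\in\mathcal N}\binom{m_I}{j_I}\cdot(\text{structure constants}).
\]
These structure constants come from expanding each monomial $\prod_I v_I^{j_I}$, with $\sum_I j_I=k$, in the basis $\{\beta\}$. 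Concretely, $\HH^*(E)=\F_2[x_1,\ldots,x_n]$ and $v_I=\sum_{i\in I}x_i$, so the structure constants are fixed integers, namely $0$ or $1$ after choosing lifts.

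The linear functional $\vt$ sends $w_k(\pi)$ to $\sum_\beta \vt(\beta)c_\beta(\pi)$. Lifting each $\vt(\beta)$ to $0$ or $1$ gives an integer-valued expression congruent mod $2$ to $\langle\vt,w_k(\pi)\rangle$.

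It remains to express everything through the character values at $e_1,\ldots,e_\ell$. By Example \ref{mult.char} applied to the group $E$,
\[
m_I(\pi)=\frac{1}{2^n}\sum_{t=1}^{\ell}\chi_\pi(e_t)\,\chi_{\sigma_I}(e_t),
\]
where $\chi_{\sigma_I}(e_t)=\pm1$. This is a $\Q$-linear form in $\chi_\pi(e_1),\ldots,\chi_\pi(e_\ell)$. The binomial coefficient
\[
\binom{m}{j}=\frac{m(m-1)\cdots(m-j+1)}{j!}
\]
is a polynomial in $m$ with rational coefficients. Substituting these into the expression above yields a polynomial $p_\vt\in\Q[x_1,\ldots,x_\ell]$. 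Its values at actual character tuples are integers, since they equal the integer combinations of binomial coefficients of nonnegative integers. Its constant term is $0$, because $k\ge1$ forces some $j_I\ge1$ in each term, and $\binom{0}{j}=0$ for $j\ge1$.

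The only subtlety is interpreting the congruence mod $2$ when $p_\vt$ has non-integral coefficients. The statement requires only that $p_\vt$ evaluated at the character values is an integer congruent to $\langle\vt,w_k(\pi)\rangle$, and integrality at those points is guaranteed by the construction. So the key point to state carefully is that $p_\vt$ is evaluated only at character tuples of genuine representations, where the binomial expressions are integers.
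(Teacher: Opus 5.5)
Your proposal is correct and follows the paper's route. The paper presents this corollary as an immediate restatement of Lemma \ref{lem1}, whose proof writes $w(\pi)=\prod_I(1+v_I)^{m_I}$, expresses each $m_I$ through character values via Example \ref{mult.char}, and expands; since $E$ is abelian, the conjugacy class representatives are just $e_1,\ldots,e_\ell$. Your extra care (summing over the finite, $\pi$-independent set of tuples $(j_I)$ with $\sum_I j_I=k$, integrality at genuine character tuples, and the vanishing constant term) makes explicit what the paper leaves implicit.
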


\subsection{Proof of Theorem 1}

We will refer to the maximal elementary abelian $2$-subgroups of $S_n$ as simply the ``EA2Gs of $S_n$''; they are described in more detail in Section \ref{EA2Gs.sn}.
 
Let us recall two major ``detection'' results.
 
\begin{thm} \cite{mann}, \cite[Theorem 1.2, page 179]{ademmil} \label{Es.detect} The restriction map
\begin{equation}\label{eq3}
\HH^k(S_n)\to \bigoplus_{E} \HH^k(E),
\end{equation}
where $E$ runs over the EA2Gs of $S_n$, is injective.
  \end{thm}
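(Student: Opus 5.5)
The plan is to reduce from $S_n$ to a Sylow $2$-subgroup and then prove detection for that $2$-group by induction on its wreath-product structure. First, let $P \leq S_n$ be a Sylow $2$-subgroup. Since $[S_n:P]$ is odd, $\Res^{S_n}_P$ followed by transfer is multiplication by $[S_n:P]$. That map is the identity on mod $2$ cohomology, so $\HH^k(S_n)\to \HH^k(P)$ is injective. Second, every elementary abelian $2$-subgroup $A \leq P$ is contained in some maximal EA2G $E$ of $S_n$, and restriction to $A$ factors through $\HH^k(E)$. Hence it suffices to show that $\HH^*(P)$ is detected by restriction to the family of elementary abelian subgroups of $P$.

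Next I would use the structure of $P$. Write $n=2^{a_1}+\cdots+2^{a_r}$ in binary. Then $P\cong W_{a_1}\times\cdots\times W_{a_r}$, where $W_0=1$ and $W_{a}=W_{a-1}\wr C_2$ is the Sylow $2$-subgroup of $S_{2^a}$. For products, the Künneth formula over the field $\Z/2\Z$ gives $\HH^*(G_1\times G_2)\cong \HH^*(G_1)\otimes \HH^*(G_2)$. So if $\HH^*(G_i)$ is detected by a family $\mc F_i$ of elementary abelian subgroups, then $\HH^*(G_1\times G_2)$ is detected by the products $A_1\times A_2$ with $A_i\in\mc F_i$. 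I would check this by tensoring the two injections $\HH^*(G_i)\hookrightarrow\prod_{A_i\in\mc F_i}\HH^*(A_i)$; tensor products of injections of vector spaces remain injective. This reduces everything to the groups $W_a$.

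The inductive step is the key point, and I expect it to be the main obstacle: if $\HH^*(G)$ is detected by elementary abelian subgroups, then so is $\HH^*(G\wr C_2)$. For this I would invoke Nakaoka's description of $\HH^*(G\wr C_2)$, the same input as in Theorem \ref{thm: nakaoka-stability}. As a vector space, $\HH^*(G\wr C_2)$ is spanned by two kinds of classes. The first kind are transfers/norms of $x\otimes y$ from $G\times G$ with $x\neq y$. The second kind are the classes $v^i\cdot \mathcal P(x)$, where $\mathcal P(x)$ is the Evens norm of $x\in\HH^*(G)$ and $v$ generates $\HH^1(C_2)$. Following Quillen, I would show that the restriction to the two subgroups $G\times G$ and $\Delta G\times C_2$ (diagonal times the swap) is jointly injective. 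Restriction to $G\times G$ detects the transfer classes and the $i=0$ part of the norm classes. Restriction to $\Delta G\times C_2$ sends $\mathcal P(x)$ to $\sum_j \Sq^j(x)\, v^{|x|-j}$, which has leading term $x\otimes v^{|x|}$. Comparing leading $v$-degrees should then show that no nonzero combination of the remaining classes dies on both subgroups; this degree comparison is the delicate part.

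Finally, $G\times G$ and $G\times C_2$ are products, so by the previous paragraph they are detected by elementary abelian subgroups, and these subgroups lie inside $G\wr C_2$. Induction on $a$, starting from $W_1=C_2$, then gives detection for every $W_a$, hence for $P$, hence for $S_n$ by the first paragraph.
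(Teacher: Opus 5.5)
Your argument is correct. The paper gives no proof of its own and only cites Mann and Adem--Milgram; your route (transfer to a Sylow $2$-subgroup, K\"unneth for $P\cong W_{a_1}\times\cdots\times W_{a_r}$, and Quillen's detection of $\HH^*(G\wr C_2)$ on $G\times G$ and $\Delta G\times C_2$ via Nakaoka's description) is the standard proof of the cited result.

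The leading-term comparison you flag as delicate is actually short. Take $x$ over a homogeneous basis of $\HH^*(G)$, which is needed since $\mathcal P$ is not additive. In total degree $k$, the class $v^{k-2|x|}\mathcal P(x)$ restricts to $\Delta G\times C_2$ with top $v$-power term $x\otimes v^{k-|x|}$. So for the smallest $|x|$ occurring with nonzero coefficient, the coefficient of $v^{k-|x|}$ in the restriction is a nonzero combination of basis elements.
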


\begin{thm}\label{thm: nakaoka-stability} \cite[Corollary 6.7]{naka}
For $n \geq 2k$, the restriction map $\HH^k(S_n) \to \HH^k(S_{2k})$
is an isomorphism.
\end{thm}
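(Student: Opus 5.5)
The statement to prove is Theorem \ref{thm: nakaoka-stability}, quoted from \cite{naka}. The paper treats it as an imported result, so a proof would mean reconstructing Nakaoka's homological stability argument.

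The plan is to work dually in mod $2$ homology and use the stable splitting $\HH_*(\coprod_n BS_n)$. First, the inclusion $S_{n}\hookrightarrow S_{n+1}$ induces a map on homology $\HH_k(S_n)\to \HH_k(S_{n+1})$. I will show that this map is injective for all $k$, and surjective once $n \geq 2k$. Dualizing over $\Z/2\Z$ then gives that restriction $\HH^k(S_{n+1})\to\HH^k(S_n)$ is surjective always and bijective for $n\ge 2k$. Composing these restrictions from $S_n$ down to $S_{2k}$ yields the claimed isomorphism.

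For injectivity I will use a transfer argument. Take the transfer $\HH_*(S_{n+1})\to \HH_*(S_n)$ for the index $n+1$ subgroup. By the double coset formula, the composite with the inclusion map is the identity plus terms factoring through $S_{n-1}$. An induction (Nakaoka's original trick, or Dold's lemma on split injectivity) then gives split injectivity.

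For surjectivity, and the precise range $n\ge 2k$, I will use the description of $\bigoplus_n \HH_*(S_n;\F_2)$ as a polynomial algebra under the product induced by $S_a\times S_b\to S_{a+b}$. The generators are iterated Dyer--Lashof (Kudo--Araki) operations $Q^{I}[1]$ applied to the class $[1]\in \HH_0(S_1)$. A generator $Q^{i_1}\cdots Q^{i_r}[1]$ sits in $\HH_*(S_{2^r})$ and has degree $d\ge 2^r-1$, using admissibility and excess conditions. A monomial of total homological degree $k$ in $\HH_k(S_n)$ is a product of such generators together with powers of $[1]$. The generators other than $[1]$ use in total at most $2k$ points, since each generator in $S_{2^r}$ has degree at least $2^{r-1}$ (for instance $Q^1[1]$ has degree $1$ in $S_2$). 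So every monomial in $\HH_k(S_n)$ with $n\ge 2k$ is $[1]$ times a monomial from $\HH_k(S_{n-1})$, which proves surjectivity of the stabilization map.

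The main obstacle is establishing the polynomial structure and the degree/weight bound for generators. That requires the Nakaoka/May computation of the homology of wreath products $C_2\wr S_m$ and the Cohen--Lada--May structure theorem. I would cite these rather than reprove them, and the only new check is the inequality $\text{weight}\le 2\cdot\text{degree}$ for indecomposables.
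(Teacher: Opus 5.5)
The paper gives no proof of this statement. It quotes Nakaoka and uses the result only as a black box, to move computations of $w_k$ down to $S_{2k}$. Your reconstruction is the standard derivation and is correct in outline. Grant that $\bigoplus_n \HH_*(S_n;\F_2)$ is the polynomial algebra on $[1]$ and the classes $Q^I[1]$ ($I$ admissible, $e(I)>0$), and that stabilization is multiplication by $[1]$. A generator of length $r$ then has weight $2^r$ and degree $d\ge 2^r-1$, so its weight is at most $d+1\le 2d$.

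Two corrections are needed.

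First, your last sentence is off by one. Equality $2^r=2d$ forces $r=d=1$, so $(Q^1[1])^k\in \HH_k(S_{2k})$ is a monomial with no factor $[1]$. Hence $\HH_k(S_{2k-1})\to\HH_k(S_{2k})$ is not onto; already $\HH_1(S_1)=0\neq \HH_1(S_2)$. What the count actually proves is that every monomial in $\HH_k(S_m)$ with $m>2k$ contains $[1]$. That is, $\HH_k(S_n)\to \HH_k(S_{n+1})$ is onto for $n\ge 2k$, which is what you stated at the outset and what the composition argument uses. The class $(Q^1[1])^k$ is the image of the cross product of the degree-one classes of the $k$ factors of $E_{\bm 1}\cong C_2^k$. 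It shows that $2k$ cannot be lowered.

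Second, given the polynomial structure, injectivity is free, because multiplication by a polynomial generator is injective. So the transfer detour is unnecessary. If you want injectivity independently, the single relation $\mathrm{tr}\circ i_*=\mathrm{id}+i'_*\circ\mathrm{tr}'$ does not by itself drive the induction. Dold's lemma needs the whole family of transfers $\HH_*(S_n)\to\HH_*(S_j\times S_{n-j})\to \HH_*(S_j)$, $j\le n$.

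Comparing the routes: the bare citation is all the paper needs. Your argument also explains where the threshold comes from, namely the single generator $Q^1[1]$ whose weight is twice its degree. The price is that the structure theorem you invoke is strictly stronger than the statement, since it contains both injectivity and the stable range as formal consequences. It also builds on Nakaoka's own wreath-product computations. So this is a reduction to a deeper cited result rather than an independent proof, which is a reasonable trade for a theorem the paper itself only imports.
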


\begin{prop}\label{chf1}
 
    Let $k,n$ be positive integers, and put $m=\left \lfloor n/2 \right \rfloor$. For each  $\vartheta \in \HH^k(S_n)^\vee$, there is an integer-valued polynomial $q_\vt$ in $m+1$ variables, with no constant term, so that for each $\pi \in \Pi(S_n)$ we have
    \beq
    q_\vt(\chi_\pi(\iota_0),\chi_\pi(\iota_1),..., \chi_{\pi}(\iota_m)) \equiv \langle \vt, w_k(\pi) \rangle \mod 2.
    \eeq

\end{prop}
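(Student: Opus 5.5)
We reduce to elementary abelian $2$-groups using Theorem \ref{Es.detect}, and then invoke Corollary \ref{conv.cor}. Let $E_1,\ldots,E_r$ be the EA2Gs of $S_n$. By Theorem \ref{Es.detect}, the restriction $\HH^k(S_n)\to\bigoplus_j \HH^k(E_j)$ is injective. Hence the dual map $\bigoplus_j \HH^k(E_j)^\vee\to \HH^k(S_n)^\vee$ is surjective, since we are working with finite-dimensional $\Z/2\Z$-vector spaces. So any $\vt\in\HH^k(S_n)^\vee$ can be written as $\vt=\sum_j \vt_j\circ\Res_{E_j}$ with $\vt_j\in\HH^k(E_j)^\vee$. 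Naturality of SWCs gives $\Res_{E_j}w_k(\pi)=w_k(\pi|_{E_j})$, and therefore
\[
\langle\vt,w_k(\pi)\rangle=\sum_j\langle\vt_j,w_k^{E_j}(\pi)\rangle .
\]
Applying Corollary \ref{conv.cor} to each summand, $\langle\vt_j,w_k^{E_j}(\pi)\rangle\equiv p_{\vt_j}(\chi_\pi(e))_{e\in E_j}\pmod 2$. Here $p_{\vt_j}$ comes from the proof of Lemma \ref{lem1}: it is a polynomial in binomial coefficients $\binom{m_I}{i}$ of the multiplicities $m_I$, which are rational linear combinations of the character values. It is therefore integer-valued on characters.

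Next we replace each variable by a value at some $\iota_i$. Every element $e\in E_j$ is an involution or the identity. It is therefore conjugate in $S_n$ to a unique $\iota_i$ with $0\le i\le m$, namely the one whose number of transpositions equals that of $e$. Since $\chi_\pi$ is a class function, $\chi_\pi(e)=\chi_\pi(\iota_i)$. Substituting gives the polynomial
\[
q_\vt(x_0,\ldots,x_m)=\sum_j p_{\vt_j}\bigl(x_{i(e)}\bigr)_{e\in E_j},
\]
which satisfies the required congruence. The integrality of $q_\vt$ should be understood as taking integer values on the tuples $(\chi_\pi(\iota_i))_i$. We get this because each summand takes integer values there, being built from binomial coefficients of the genuine multiplicities $m_I(\pi|_{E_j})$.

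The step needing the most care is the ``no constant term'' condition. Polynomials of the form $\binom{m_I}{i}$ with $i\ge 1$, and products of such, vanish when all multiplicities are zero. Moreover, each $m_I$ is a linear form in the character values with no constant term. Since $k\ge1$, every monomial appearing in the degree-$k$ homogeneous part of $\prod_I(1+v_I)^{m_I}$ involves at least one factor $\binom{m_I}{i}$ with $i\ge1$. Expanded as a polynomial in the $x$'s, each $p_{\vt_j}$ therefore vanishes at $0$, and consequently $q_\vt(0,\ldots,0)=0$. If Corollary \ref{conv.cor} is taken as a black box, I would instead subtract the constant term $c$. Evaluating at the zero representation, where $w_k=0$, shows $c$ is an even integer, so removing it does not change the congruence mod $2$. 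This fallback also handles any ambiguity in how the corollary is stated.
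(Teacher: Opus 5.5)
Your proposal is correct and follows the paper's proof essentially step for step. You extend $\vartheta$ through the injective detection map of Theorem~\ref{Es.detect} to a sum of functionals on the $\HH^k(E)$, apply Corollary~\ref{conv.cor} to each summand, and replace each $\chi_\pi(e)$ by $\chi_\pi(\iota_i)$, using that involutions are conjugate exactly when they have the same number of transpositions. Your extra check of the no-constant-term and integrality conditions, which the paper leaves implicit in Lemma~\ref{lem1}, is sound, and so is the fallback of subtracting a constant shown to be even by evaluating at the zero representation.
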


\begin{proof}
	
 Fix $\vartheta \in \HH^k(S_n)^\vee$. Since  \eqref{eq3} is an injection, $\vartheta$ can be extended to a functional $\widetilde{\vartheta}$ on $\bigoplus\limits_{E }\HH^k(E)$. Then
 \begin{equation}\label{eq4}
 \widetilde{\vartheta}=\Sigma_{E}\vartheta_{E},
 \end{equation}
 where $E$ varies over the EA2Gs of $S_n$, and $\vartheta_{E} \in \HH^k(E)^\vee$. (More precisely $\vt_E$ is projection to $\HH^k(E)$ followed by a linear functional.) By Corollary \ref{conv.cor}, there is a polynomial $p_E$ in $\ell=|E|$ variables so that we  
$$\langle w_k(\pi),\vartheta_{E}\rangle \equiv p_{E}(\chi_\pi(e_{1}),..., \chi_{\pi}(e_\ell))\pmod 2,$$
where $E=\{e_1,\ldots, e_\ell\}$. Grouping elements of $E$ which are conjugate in $S_n$ gives a polynomial $q_E \in \Q[x_0,x_1, \ldots, x_m]$ with
\beq
q_{E}(\chi_\pi(\iota_0),\chi_\pi(\iota_{1}),..., \chi_{\pi}(\iota_m))=p_{E}(\chi_\pi(e_{1}),..., \chi_{\pi}(e_\ell)).
\eeq
Using Equation \eqref{eq4} we obtain
\beq
\begin{split}
\langle w_k(\pi),\vartheta\rangle &=  \langle w_k(\pi),\widetilde{\vartheta}\rangle  \\
  &=\langle w_k(\pi),\Sigma_{E}\vartheta_{E}\rangle \\
  &\equiv\sum_{E} q_E(\chi_\pi(\iota_0),..., \chi_{\pi}(\iota_m))\pmod 2 \\
  &\equiv q_\vt(\chi_\pi(\iota_0),..., \chi_{\pi}(\iota_m)), \\
  \end{split}
  \eeq
   where $q_\vt=\sum_E q_E$.
\end{proof}

 \begin{Scholium} We have  shown that for each $k \geq 1$, there is a character formula for $w_k^{S_n}$.
 \end{Scholium}

From Proposition \ref{chf1} we deduce our first theorem.

\begin{proof} (of Theorem \ref{thm1}) Let $n \geq 2k$. By Theorem \ref{thm: nakaoka-stability}, $w_k(\pi_\la)=0$ iff for all $\vt \in \HH^k(S_{2k})^\vee$ we have $\lip \vt,w_k(\pi_\la|_{S_{2k}} )\rip=0$.
By Proposition \ref{chf1}, there are integer-valued polynomials $q_\vt$ in $k+1$ variables, with no constant term, so that for all $\pi \in \Pi(S_{2k})$
\beq
q_\vt(\chi_\pi(\iota_0),..., \chi_{\pi}(\iota_k)) \equiv \lip \vt,w_k(\pi) \rip \mod 2.
\eeq
By clearing denominators, there are positive integers $d_\vt$ so that each $d_\vt q_\vt(x_0, x_1, \ldots, x_k) \in \Z[x_0, x_1, \ldots, x_k]$.
Let $D$ be the product of the $d_\vt$ for $\vt \in \HH^k(S_{2k})^\vee$.
From  \cite{GPS} we know
\begin{equation} \label{frum.gps}
\lim_{n\rightarrow \infty}\dfrac{\#\{\lambda\vdash n\mid \chi_{\lambda}(\iota_i) \text{ is divisible by } 2 D  \:\:\: \forall   0 \leq i \leq k\}}{p(n)}=1.
\end{equation}
 
Now suppose $\pi \in \Pi(S_{n})$, and each $\chi_\pi(\iota_i)$ is divisible by $2D$. Then $q_\vt(\chi_\pi(\iota_0),..., \chi_{\pi}(\iota_k))$ is even, which entails that
$\lip \vt,w_k(\pi|_{S_{2k}}) \rip=0$. This being true for all $\vt$, it must be that $w_k(\pi)=0$.  
Hence
\beq
\dfrac{\#\{\lambda\vdash n\mid \chi_{\lambda}(\iota_i) \text{ is divisible by } 2D \:\:\: \forall   0 \leq i \leq k \}}{p(n)} \leq   \frac{
        \#\{\lambda\vdash n : w_k(\pi_\lambda)=0\}
        }{p(n)},
        \eeq
        so the conclusion follows from \eqref{frum.gps}.
\end{proof}

   \section{SWCs for Elementary Abelian $2$-groups}
   
 \subsection{Cohomology}
  
 The mod $2$ cohomology of $C_2$ is polynomial in $v=w_1(\sgn) \in \HH^1(C_2)$, so we simply write 
  $\HH^*(C_2)=(\Z/2\Z)[v]$. Let $E$ be an elementary abelian $2$-group with basis $e_1, \ldots, e_r$, and let $e_1^*, \ldots, e_r^*$ be the dual basis. 
  If we put $v_i=\sgn(e_i^*)$, then by K\"{u}nneth, the mod $2$ cohomology of $E$ is polynomial in the $v_i$, in other words
  $\HH^*(E)=(\Z/2\Z)[v_1, \ldots, v_r]$.  
  In fact, the first SWC can be regarded as an isomorphism $w_1: E^\vee \overset{\sim}{\to} \HH^1(E)$, which extends to an identification $\Sym^* E^\vee  \overset{\sim}{\to} \HH^*(E)$.
  
  In what follows, we sometimes write $|\alpha|=k$ when $\alpha \in \HH^k(E)$.
 
 \subsection{Vanishing Order of a Representation}

\begin{defn} \label{order.ea2g} Given a representation $\pi$ of $E$, put
\beq
\ord(\pi)=\max\{ k \geq 0 \mid \forall e \in E, \: \:   \chi_\pi(e) \equiv \deg \pi \mod 2^{k+1}\}.
\eeq
Put $\ord(\pi)=\infty$ when $\pi$ is trivial.
 \end{defn}
  
 \begin{example} If $\vartheta \in E^\vee$ is nontrivial, then $\ord(\vartheta)=0$; moreover 
 $\ord \left( \vartheta^{\oplus 2^a} \right)=a$ for any $a \geq 0$.
 \end{example}

 \begin{prop} \label{c2.vanishing.order} Let $E=C_2$. 
 \begin{enumerate}
\item If $\pi$ is nontrivial, then $w_1(\pi)= \cdots =w_{2^k}(\pi)=0$ iff $\ord(\pi) \geq k+1$.
\item If $w(\pi)=1$, then $\pi$ is trivial. 
\end{enumerate}
 \end{prop}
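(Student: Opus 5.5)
For $E=C_2$ every representation is $\pi = a\mathbb 1 \oplus b\,\sgn$, so the whole argument reduces to binomial coefficients mod $2$. The earlier example gives $w(\pi)=(1+v)^b$ and $w_j(\pi)=\binom{b}{j}v^j$. Since $\HH^*(C_2)=(\Z/2\Z)[v]$ is a polynomial ring, $v^j\neq 0$ for all $j$. Hence $w_j(\pi)=0$ iff $\binom{b}{j}$ is even.

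Next I translate $\ord(\pi)$ into a condition on $b$. We have $\chi_\pi(1)=\deg\pi=a+b$ and $\chi_\pi(e)=a-b$ for the generator $e$. So $\deg\pi-\chi_\pi(e)=2b$, and the identity imposes no condition. Therefore $\ord(\pi)$ is the largest $k$ with $2^{k+1}\mid 2b$, i.e.\ $\ord(\pi)=\nu_2(b)$, the $2$-adic valuation of $b$. Here $\pi$ is nontrivial in the sense of being a non-trivial representation, i.e.\ $b\ge 1$, so the valuation is finite. Part (1) then becomes the claim: for $b\ge1$, $\binom{b}{j}$ is even for all $1\le j\le 2^k$ iff $2^{k+1}\mid b$.

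I would prove this with Lucas' theorem, or equivalently with the identity $(1+v)^{2^s}=1+v^{2^s}$ in characteristic $2$. Write $b=2^s u$ with $u$ odd, so $s=\nu_2(b)$. Then
\[
(1+v)^b=(1+v^{2^s})^u = 1+u\,v^{2^s}+(\text{higher terms}) = 1+v^{2^s}+\cdots .
\]
Thus the first nonvanishing positive-degree SWC is $w_{2^s}(\pi)$, and all $w_j$ with $0<j<2^s$ vanish. Consequently $w_1=\cdots=w_{2^k}=0$ iff $2^k<2^s$, i.e.\ iff $s\ge k+1$, i.e.\ iff $\ord(\pi)\ge k+1$. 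This gives (1).

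For (2), if $w(\pi)=1$ then $(1+v)^b=1$. If $b\ge1$, the computation above gives $w_{2^s}(\pi)=v^{2^s}\neq0$, a contradiction. So $b=0$ and $\pi=a\mathbb 1$ is trivial.

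There is no real obstacle here. The only points needing care are the convention that "nontrivial" means $b\ge1$, so that $\ord(\pi)$ is finite, and checking that the coefficient of $v^{2^s}$ is $u \equiv 1 \pmod 2$.
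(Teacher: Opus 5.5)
Your proof is correct and follows the same route as the paper. The paper also writes $w(\pi)=(1+v)^m$ with $m=\frac{\deg\pi-\chi_\pi(g)}{2}$, the multiplicity of $\sgn$, notes that $\binom{m}{i}$ is even for all $0<i\le 2^k$ exactly when $2^{k+1}\mid m$ (i.e.\ $\ord(\pi)\ge k+1$), and deduces (2) from (1). Your identity $(1+v)^{2^s}=1+v^{2^s}$ just makes explicit the binomial-parity fact the paper states without proof.
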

   
 \begin{proof}  Let $0 \neq \vartheta \in E^\vee$, and $v=w_1(\vartheta)$. Then $w(\pi)=(1+v)^m$, where $m=\frac{\deg \pi-\chi(g)}{2}$.
    The binomial coefficients $\binom{m}{i}$ are even for all $0<i \leq 2^k$ iff $m$ is a multiple of $2^{k+1}$. The second statement follows from the first.
    \end{proof}

  For $E$ arbitrary, by restricting to cyclic subgroups we have:
   
 \begin{cor} \label{binom.mod.2}  
 If $w_1(\pi)=\cdots =w_{2^k}(\pi)=0$, then $\ord(\pi) \geq k+1$. 
     \end{cor}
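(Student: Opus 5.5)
The plan is to reduce to cyclic subgroups of $E$ and then apply Proposition \ref{c2.vanishing.order}. The key observation is that $\ord(\pi)$ is defined elementwise: $\ord(\pi) \geq k+1$ means exactly that $\chi_\pi(e) \equiv \deg \pi \pmod{2^{k+2}}$ for every $e \in E$. So it suffices to check this congruence one element at a time.

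First, the identity element satisfies it trivially. Next, fix $e \neq 1$ in $E$ and let $C=\langle e\rangle \cong C_2$. By naturality of Stiefel-Whitney classes under restriction, $w_i^C(\pi)=\Res^E_C\, w_i(\pi)$ for every $i$. The hypothesis therefore gives $w_1^C(\pi)=\cdots=w_{2^k}^C(\pi)=0$.

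There are now two cases for $\pi|_C$. If $\pi|_C$ is trivial, then $\chi_\pi(e)=\deg\pi$, and the congruence holds for every modulus. Otherwise, part (1) of Proposition \ref{c2.vanishing.order} applies to $\pi|_C$ and gives $\ord(\pi|_C)\geq k+1$. Since $\chi_{\pi|_C}(e)=\chi_\pi(e)$ and $\deg \pi|_C=\deg\pi$, this says precisely $\chi_\pi(e)\equiv\deg\pi \pmod{2^{k+2}}$. Running over all $e\in E$ yields $\ord(\pi)\geq k+1$.

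One degenerate case needs a word. If $\pi$ is trivial, then $\ord(\pi)=\infty$ by convention, so the statement holds; the elementwise argument above covers it anyway.

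There is no real obstacle here. The only points needing care are the bookkeeping that $\ord$ of the restriction to $C$ is computed from the same character values, and applying Proposition \ref{c2.vanishing.order} only to nontrivial restrictions.
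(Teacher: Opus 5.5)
Your proposal is correct and is essentially the paper's own argument: the paper also proves this by restricting to each cyclic subgroup $\langle e\rangle\cong C_2$ and applying Proposition \ref{c2.vanishing.order}, which gives $\chi_\pi(e)\equiv\deg\pi \pmod{2^{k+2}}$ one element at a time. Your separate handling of elements on which $\pi$ restricts trivially is a detail the paper leaves implicit.
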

  
  \bigskip
  We now start building to the converse of this. (See Section \ref{next.secshun}.)
   
 For a given $e \in E$, consider the virtual representation
   \beq
    \pi_e=\sum_{v \in E^\vee} {\lip v,e \rip}v,
    \eeq
meaning that the linear representation $v$ occurs in $\pi_e$ with multiplicity $\pm 1={\lip v,e \rip}$.
In particular, $\pi_0$ is the regular representation of $E$.

\begin{prop} Write $\chi_e$ for the character of $\pi_e$.
\begin{enumerate}
\item If $e \neq e'$, then $\chi_e(e')=0$.
\item $\chi_e(e)=|E|$.
\end{enumerate}
\end{prop}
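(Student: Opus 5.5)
Both parts follow from evaluating the character of the virtual representation $\pi_e$ directly. By definition, $\chi_e(e')=\sum_{v \in E^\vee} \lip v,e\rip \, v(e')$. Here each linear character $v \in E^\vee$ takes values $\pm 1$, and we regard $\lip v,e\rip = v(e) \in \{\pm 1\}$. The sum therefore becomes $\sum_{v \in E^\vee} v(e)v(e') = \sum_{v\in E^\vee} v(e+e')$, using that $v$ is a homomorphism $E \to \{\pm1\}$, written additively on $E$. The whole argument thus reduces to the orthogonality of characters for the finite abelian group $E^\vee$, evaluated at the element $e+e' \in E \cong (E^\vee)^\vee$.

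For part (2), take $e'=e$. Then $e+e=0$, so every term $v(0)$ equals $1$ and the sum is $|E^\vee|=|E|$. For part (1), take $e\neq e'$, so that $g:=e+e'\neq 0$. I would then produce a $v_0 \in E^\vee$ with $v_0(g)=-1$. To do this, extend $g$ to a basis $e_1=g,e_2,\ldots,e_r$ of $E$ over $\Z/2\Z$ and take $v_0=e_1^*$ (with sign convention $\pm1$). Since $v\mapsto v_0v$ is a bijection of $E^\vee$, the standard trick gives
\[
S=\sum_{v} v(g)=\sum_v (v_0v)(g)=v_0(g)S=-S,
\]
and hence $S=0$.

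No step here is a genuine obstacle. The only point needing care is the identification of the pairing $\lip v,e\rip$ with the $\pm1$-valued evaluation $v(e)$, together with the existence of a character separating a nonzero $g$ from $0$. That existence is immediate because $E$ is a vector space over $\Z/2\Z$.
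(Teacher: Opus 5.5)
Your proof is correct and follows the paper's argument: the paper likewise writes $\chi_e(e')=\sum_{v} \lip v,e\rip\lip v,e'\rip=\sum_{v} \lip v,e-e'\rip$ and appeals to orthogonality, which you make explicit through the translation trick with a character $v_0$ satisfying $v_0(e+e')=-1$. Incidentally, the paper's phrase ``vanishes iff $e=e'$'' should read ``iff $e\neq e'$'', which your version states correctly.
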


\begin{proof}
This is because
\beq
\chi_e(e')=\sum_{v \in E^\vee} \lip v,e \rip \lip v,e' \rip=\sum_{v \in E^\vee} \lip v,e-e' \rip,
\eeq
which vanishes iff $e=e'$.
\end{proof}

Consequentially:
\begin{prop} \label{30s} Let $\pi$ be a representation of $E$ with $ \rank E - 1 \leq \ord(\pi)$, and for each $e \in E$, put $m_e=\frac{\chi(e)-\deg \pi}{|E|}$. Then as virtual representations we have
\beq
\pi- (\deg \pi) {\mb 1}=\sum_e m_e \pi_e.
\eeq
\end{prop}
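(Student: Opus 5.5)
Virtual representations of $E$ are determined by their characters, so the plan is to check that both sides of the claimed identity have the same character at every $e' \in E$, and that the coefficients $m_e$ are integers.

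\emph{Integrality.} Let $r=\rank E$, so $|E|=2^r$. The hypothesis $\ord(\pi) \geq r-1$ means that for every $e \in E$,
\beq
\chi_\pi(e) \equiv \deg \pi \pmod{2^{r}}.
\eeq
Hence $m_e=\frac{\chi_\pi(e)-\deg\pi}{|E|}$ is an integer, and $\sum_e m_e \pi_e$ is a genuine virtual representation, namely a $\Z$-combination of the $\pi_e$, which are themselves $\Z$-combinations of linear characters. If $\pi$ is trivial, then $\ord(\pi)=\infty$, every $m_e=0$, and both sides vanish; so nothing special happens in that case.

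\emph{Character comparison.} Evaluate both sides at an arbitrary $e' \in E$.
\begin{itemize}
\item The left side has character $\chi_\pi(e')-\deg\pi$.
\item On the right side, the preceding proposition gives $\chi_e(e')=0$ for $e\neq e'$ and $\chi_{e'}(e')=|E|$. So the character of the right side at $e'$ is
\beq
m_{e'}|E| = \chi_\pi(e')-\deg\pi.
\eeq
\end{itemize}
The two characters agree at every $e'$. Because the linear characters $v\in E^\vee$ are linearly independent, characters separate virtual representations, and the two virtual representations are equal.

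The only step with any content is integrality, which is exactly where the hypothesis $\rank E-1\leq\ord(\pi)$ is used. The rest is the orthogonality computation already established.
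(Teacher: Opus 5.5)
Your proof is correct and is essentially the paper's own argument. The paper deduces the proposition directly ("Consequentially") from the preceding relation $\chi_e(e')=|E|$ if $e=e'$ and $0$ otherwise, by comparing characters at each $e'\in E$. The hypothesis $\ord(\pi)\geq \rank E-1$ serves only to make each $m_e$ an integer, exactly as you observed.
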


\subsection{Dickson Invariants}
The \emph{Dickson product} for $E$ is defined as
\beq
\mc D(E)= \prod_{v \in E^\vee} (1+v) \in \Sym^*(E^\vee).
\eeq
It may be identified with the total SWC of the regular representation of $E$.

The nonzero homogeneous components of $\mc D(E)$ are  $\GL(E)$-invariant polynomials called \emph{Dickson invariants}. Let $r=\rank E$; the invariants occur in degrees $2^{r-1}=2^r-2^{r-1}, 2^r-2^{r-2}, \ldots, 2^r-1$. Let us write $d_i(E)$ for the Dickson invariant of degree $2^r-2^{r-i}$, for $1 \leq i \leq r$. For example, $d_r(E)$ is the product of the nonzero members of  $E^\vee$.

 \begin{example}
 For $r=2$, let $v_1,v_2$ be a basis of $E^\vee$; then $d_1(E)=v_1^2+v_1v_2+v_2^2$  and $d_2(E)=v_1^2v_2+v_1v_2^2$.
 For $r=3$, let  $v_1,v_2,v_3$ be a basis of $E^\vee$; then
\begin{equation} \label{rk3.dickson}
d_1(E)=v_1^4+v_1^2(v_2^2+v_2v_3+v_3^2)+v_1(v_2^2v_3+v_2 v_3^2)+v_2^4+v_2^2v_3^2+v_3^4.
\end{equation}
\end{example}

For more details and examples, see \cite{wilkerson} and \cite{ademmil}. Our $d_i(E)$ is written as $c_{r,r-i}$ in \cite{wilkerson}. Most of our computations only involve the first Dickson invariant, so write $d_E=d_1(E)$; its degree is $2^{r-1}$.
 
\subsection{Vanishing Orders and SWCs} \label{next.secshun}
 \begin{defn}   For $e \neq 0$ in $E$, write $e^\perp \subset E^\vee$ for the hyperplane of functionals vanishing on  $e$. Write $d_e=d_1(e^\perp) \in \Sym^* e^\perp \subset \Sym^* E^\vee$.
 \end{defn}
 Note that $\deg d_e=2^{r-2}$. We have
  \beq
  \begin{split}
  w(\pi_e) &=\frac{\Pi_{v \in e^\perp}(1+v)}{\Pi_{v \notin e^\perp}(1+v)} \\
  &= \frac{ \mc D(e^\perp)^2}{\mc D(E^\vee)} \\
        &= 1+ d_E + d_{e}^2+ HOT. \\
        \end{split}
  \eeq
 (Here `$HOT$' means ``higher order terms''.)  Define $d_0 = 0$. Write $\mc D_e=d_E+d_e^2$; we have shown $w_{2^{r-1}}(\pi_e)=\mc D_e$.
  
  \begin{prop} \label{sum.de.vanish} We have
  $\sum_e d_e=0$.
  \end{prop}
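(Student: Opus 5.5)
The plan is to get the identity from multiplicativity of the total SWC applied to the virtual representations $\pi_e$, and then take a square root in the polynomial ring $\Sym^* E^\vee$. Let $r=\rank E$. The case $r=1$ is degenerate, since $e^\perp=0$; there the identity comes down to the convention adopted for $d_1$ of the zero group, so I would treat it separately and assume $r \geq 2$ from now on.

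First, sum the $\pi_e$ as virtual representations:
\beq
\sum_{e\in E}\pi_e=\sum_{v\in E^\vee}\Big(\sum_{e\in E}\lip v,e\rip\Big)v=|E|\cdot\mb 1 .
\eeq
This holds because the inner sum vanishes unless $v$ is trivial. The total SWC is multiplicative on virtual representations and $w(\mb 1)=1$, so
\beq
\prod_{e\in E} w(\pi_e)=1 .
\eeq

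Next, record the low-degree terms of each factor. For $e\neq 0$ the computation above gives $w(\pi_e)=1+d_E+d_e^2+HOT$. For $e=0$ we have $w(\pi_0)=\mc D(E)=1+d_E+HOT$, which matches the same shape with $d_0=0$. In every case the nonconstant part of $w(\pi_e)$ lives in degrees $\geq 2^{r-1}$. This is because $\mc D(e^\perp)$ has its lowest nonconstant term in degree $2^{r-2}$, so its square starts in degree $2^{r-1}$, and $\mc D(E)^{-1}$ also starts in degree $2^{r-1}$. Consequently, any product of two or more nonconstant parts has degree $\geq 2^r>2^{r-1}$.

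The degree-$2^{r-1}$ component of $\prod_e w(\pi_e)$ is therefore
\beq
|E|\,d_E+\sum_{e} d_e^2 .
\eeq
Since $|E|$ is even, this reduces mod $2$ to $\sum_e d_e^2=\left(\sum_e d_e\right)^2$, using the Frobenius in characteristic $2$. This must vanish because the product equals $1$. Finally, $\Sym^*E^\vee$ is a polynomial ring over $\Z/2\Z$, hence a domain, so $\left(\sum_e d_e\right)^2=0$ forces $\sum_e d_e=0$.

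The only step needing care is the degree bookkeeping: checking that no cross terms or higher-order contributions land in degree $2^{r-1}$. This is the place where the restriction $r\ge2$ is used.
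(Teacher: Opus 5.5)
Your proof is correct, but it takes a different route from the paper.

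The paper uses invariant theory. Since $\GL(E)$ permutes the hyperplanes $e^\perp$ and $d_1$ is natural, $\sum_e d_e$ is a $\GL(E)$-invariant of degree $2^{r-2}$. By Dickson's theorem, $\Sym^*(E^\vee)^{\GL(E)}$ is generated by $d_1(E),\ldots,d_r(E)$, and the smallest positive degree among these is $2^{r-1}$. So there is no nonzero invariant in degree $2^{r-2}$, and the sum vanishes.

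You instead work inside the SWC formalism. You combine:
\begin{itemize}
\item the identity $\sum_e \pi_e=|E|\cdot\mb 1$ in the representation ring,
\item multiplicativity of $w$,
\item the expansion $w(\pi_e)=1+d_E+d_e^2+HOT$, which the paper has already derived.
\end{itemize}
This gives $\sum_e d_e^2=0$ in degree $2^{r-1}$, and you then take the square root in the reduced ring $\Sym^*E^\vee$.

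The paper's argument is a one-liner, but it relies on the full strength of Dickson's theorem, namely that the $d_i$ generate all invariants. Yours needs only the degree pattern of the Dickson product, which is already needed to define $d_E$ and $d_e$, together with standard facts about total SWCs of virtual representations. So your argument is more self-contained. It also mirrors the character-sum and square-root step the paper later uses in the proof of Corollary~\ref{schmabel}.

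Your degree bookkeeping is sound: every nonconstant term of each $w(\pi_e)$ lies in degree at least $2^{r-1}$, so cross terms first appear in degree $2^r$. Treating $r=1$ as a matter of convention is consistent with the paper, whose assertion $\deg d_e=2^{r-2}$ already presupposes $r\ge 2$.
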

  
  \begin{proof} This is because the sum would be $\GL(E)$-invariant, but there are not such invariants of degree $2^{r-2}$.
  \end{proof}

 \begin{thm} \label{summer}
 Let $E$ be an EA2G of rank $r$, and $\pi$ a representation of $E$. Suppose  $f=\ord(\pi)  \geq r-1$. Then $w_i(\pi)=0$ for $1 \leq i<2^f$, and
   \begin{equation} \label{thiss}
   w_{2^{f}}(\pi)=\sum_e \frac{\deg \pi -\chi_\pi(e)}{2^{f+1}} \mc D_e^{2^{f-r+1}}.
   \end{equation}
 \end{thm}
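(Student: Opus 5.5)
The plan is to decompose $\pi$ using Proposition \ref{30s} and then compute the total SWC multiplicatively. Since $f=\ord(\pi)\geq r-1$, Proposition \ref{30s} applies. Because $w(\mb 1)=1$ and $w$ is multiplicative on virtual representations, we get
\beq
w(\pi)=\prod_{e\neq 0} w(\pi_e)^{m_e},\qquad m_e=\frac{\chi_\pi(e)-\deg\pi}{2^r}.
\eeq
The factor with $e=0$ can be dropped because $m_0=0$. By definition of $\ord$, every $\chi_\pi(e)-\deg\pi$ is divisible by $2^{f+1}$, so each $m_e$ is divisible by $2^{f+1-r}$. Write $m_e=2^{s}n_e$ with $s=f-r+1\geq 0$ and $n_e\in\Z$.

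Next I use the Frobenius map in characteristic $2$. In $\HH^*(E)$ we have $(1+x)^{2^s}=1+x^{2^s}$ for any $x$ in the augmentation ideal, and this also holds for inverses of such power series. Hence $w(\pi_e)^{m_e}=\bigl(w(\pi_e)^{2^s}\bigr)^{n_e}$. From the computation preceding Proposition \ref{sum.de.vanish}, $w(\pi_e)=1+\mc D_e+(\text{terms of degree}>2^{r-1})$. Therefore
\beq
w(\pi_e)^{2^s}=1+\mc D_e^{2^s}+(\text{terms of degree}>2^{s+r-1}=2^f).
\eeq
Raising to the integer power $n_e$ (possibly negative, with inverses taken as formal power series) gives $1+n_e\mc D_e^{2^s}+(\text{higher terms})$, and the higher terms have degree $>2^f$. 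The only subtlety is that products of two or more $\mc D_e^{2^s}$ terms have degree $\geq 2^{f+1}$, so they are also beyond $2^f$. Multiplying over $e$ then shows that $w_i(\pi)=0$ for $0<i<2^f$, and that $w_{2^f}(\pi)=\sum_e n_e\mc D_e^{2^s}$ mod $2$.

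Finally I reconcile this with formula \eqref{thiss}. We have $n_e=m_e/2^s=(\chi_\pi(e)-\deg\pi)/2^{f+1}$, which agrees mod $2$, up to sign, with the coefficient $\frac{\deg\pi-\chi_\pi(e)}{2^{f+1}}$ in \eqref{thiss}. The $e=0$ term contributes nothing in either expression.

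The main obstacle is the degree bookkeeping: I must make sure that the "HOT" in $w(\pi_e)$ really starts strictly above degree $2^{r-1}$, so that after the $2^s$-th power no stray term lands in degree exactly $2^f$. This is immediate because $w(\pi_e)$ has nonzero terms only in degrees where the Dickson-product expansion allows. Even without that, any homogeneous term of degree $d>2^{r-1}$ becomes a term of degree $2^s d>2^f$ after Frobenius, so the conclusion holds. The fact that $m_e$ is divisible by $2^s$ is the crucial input from the hypothesis $f\geq r-1$, and it uses the integrality built into Proposition \ref{30s}.
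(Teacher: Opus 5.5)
Your proof is correct and follows essentially the same route as the paper. Both arguments decompose $\pi-(\deg\pi){\mb 1}$ via Proposition \ref{30s}, use the Frobenius in characteristic $2$ to pull the factor $2^{f-r+1}$ out of each $m_e$ so that $w(\pi)=\prod_e\bigl(1+\mc D_e^{2^{f-r+1}}+\text{HOT}\bigr)^{m_e'}$, and then read off the degree-$2^f$ term. Your explicit degree bookkeeping and your remark that the sign of the coefficient is irrelevant mod $2$ simply spell out steps the paper leaves implicit.
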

 \begin{proof}
Each $m_e=\frac{\chi(e)-\deg \pi}{2^r}$  is divisible by $2^{f-r+1}$, so define 
 \beq
 m_e'=m_e/2^{f-r+1}=\frac{\chi_\pi(e)-\deg \pi}{2^{f+1}}.
 \eeq
 
 By Proposition \ref{30s}, we have 
 \beq
 \begin{split}
 w(\pi) &=\prod_e (1+ \mc D_e + HOT)^{m_e} \\
        &= \prod_e (1+ \mc D_e^{2^{f-r+1}} +HOT)^{m_e'} \\
        \end{split}
        \eeq
		
The conclusion follows since $|\mc D_e^{2^{f-r+1}}|=2^{f}$. \end{proof}

\begin{example} Let $\ord(\pi) \geq 1$.
For $\rank E=2$, we have
\beq
w_2(\pi)=\sum_e  \frac{\chi_\pi(e)-\deg \pi}{4} \mc D_e.
\eeq
\end{example}

  \begin{cor} \label{schmabel} Let $f=\ord(\pi)$. Then $w_i(\pi)=0$ for $1 \leq i<2^f$, and if $r \geq f+2$, then
\beq
w_{2^f}(\pi)^{2^{r-f-2}}=\sum_e \frac{\chi_\pi(e)-\deg \pi}{2^{f+1}}d_e.
\eeq
  
\end{cor}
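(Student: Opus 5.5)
The idea is to reduce to Theorem \ref{summer} by replacing $\pi$ with a multiple $2^a\pi$ whose vanishing order is exactly $r-1$, and then to take square roots in $\HH^*(E)=\Sym^*E^\vee$. The Frobenius map $x\mapsto x^2$ on this polynomial ring is an injective ring homomorphism, and this is what makes the square roots legitimate. If $\pi$ is trivial there is nothing to prove, so assume $f=\ord(\pi)<\infty$.

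\textbf{Vanishing.} If $f\geq r-1$, the vanishing statement is already part of Theorem \ref{summer}, so assume $f\leq r-2$ and put $a=r-1-f\geq 1$ and $\pi'=\pi^{\oplus 2^a}$. Since $\deg\pi'-\chi_{\pi'}(e)=2^a(\deg\pi-\chi_\pi(e))$, Definition \ref{order.ea2g} gives $\ord(\pi')=f+a=r-1$. The Whitney sum formula gives $w(\pi')=w(\pi)^{2^a}$. Write $w(\pi)=1+x_j+(\text{higher terms})$, where $x_j\neq 0$ is the lowest nonzero positive-degree component, of degree $j$. By injectivity of Frobenius, $w(\pi')=1+x_j^{2^a}+\cdots$ with $x_j^{2^a}\neq 0$ in degree $j2^a$. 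Theorem \ref{summer} applied to $\pi'$ says that $w_i(\pi')=0$ for $1\leq i<2^{r-1}$. Hence $j2^a\geq 2^{r-1}$, that is, $j\geq 2^f$, which proves the vanishing.

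\textbf{The formula.} Now assume $r\geq f+2$, so $a\geq 1$. Comparing degree-$2^{r-1}$ components and using \eqref{thiss} for $\pi'$ with $f'=r-1$ (so that the exponent $2^{f'-r+1}$ is $1$), we get
\beq
w_{2^f}(\pi)^{2^a}=w_{2^{r-1}}(\pi')=\sum_e m'_e\,(d_E+d_e^2),
\eeq
where $m'_e=\frac{\deg\pi-\chi_\pi(e)}{2^{f+1}}$. Signs are irrelevant mod $2$, and the factor $2^a$ cancels between numerator and denominator. The key step is to show that the $d_E$ contribution vanishes, i.e.\ that $\sum_e m'_e$ is even. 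By orthogonality, $\sum_{e\in E}\chi_\pi(e)=|E|\,m_{\mb 1}(\pi)$, so
\beq
\sum_e m'_e=\frac{2^r(\deg\pi-m_{\mb 1}(\pi))}{2^{f+1}},
\eeq
which is even because $r\geq f+2$. (The term $e=0$ contributes $0$, consistent with $d_0=0$.)

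It follows that $w_{2^f}(\pi)^{2^a}=\sum_e m'_e d_e^2=\bigl(\sum_e m'_e d_e\bigr)^2$, again by Frobenius. Taking one square root, which is allowed by injectivity of squaring, gives $w_{2^f}(\pi)^{2^{a-1}}=\sum_e m'_e d_e$. Since $a-1=r-f-2$, this is the claimed identity. The only nonroutine point is the parity of $\sum_e m'_e$, which is exactly where the hypothesis $r\geq f+2$ is used.
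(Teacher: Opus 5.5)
Your proposal is correct and follows essentially the same route as the paper. Both replace $\pi$ by $\pi^{\oplus 2^{r-f-1}}$, which has order $r-1$, and apply Theorem \ref{summer}. Both then use $w(\pi^{\oplus 2^a})=w(\pi)^{2^a}$, kill the $d_E$ term via $\sum_e\chi_\pi(e)=|E|\,m_{\mb 1}(\pi)$, and take a square root. Your explicit appeal to injectivity of Frobenius on $\HH^*(E)$ simply makes precise the paper's ``this entails'' and ``taking square roots'' steps.
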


\begin{proof}
  If $r \leq f+1$ we are done by Theorem \ref{summer}. Else $r \geq f+2$. Put $\Pi=\pi^{\oplus 2^{r-f-1}}$, so that
$\ord(\Pi)=r-1$. By Theorem \ref{summer}, $w_i(\Pi)=0$ for $1 \leq i<2^{r-1}$, and
\beq
\begin{split}
w_{2^{r-1}}(\Pi) &=\sum_e \frac{\chi_\Pi(e)-\deg \Pi}{2^{r}} \mc D_e \\
&= \sum_e \frac{\chi_\pi(e)-\deg \pi}{2^{f+1}}\mc D_e. \\
\end{split}
\eeq

Now for $1 \leq k$ we have 
\begin{equation}  
w_{k2^{r-f-1}}(\Pi)=w_k(\pi)^{2^{r-f-1}}.
\end{equation}
This entails that $w_i(\pi)=0$ for $1 \leq i<2^f$, and
\beq
w_{2^f}(\pi)^{2^{r-f-1}} =\sum_e \frac{\chi_\pi(e)-\deg \pi}{2^{f+1}}\mc D_e. 
\eeq
  
Now $\sum_e \chi_\pi(e)= |E|\mu_0$,
 where $\mu_0$ is the multiplicity of the trivial representation in $\pi$.
 Therefore
 \beq
 \sum_e \frac{\chi_\pi(e)-\deg \pi}{2^{f+1}}=\frac{|E|}{2^{f+1}}(\mu_0 - \deg \pi),
 \eeq
 which vanishes mod $2$.
Therefore
\beq
w_{2^f}(\pi)^{2^{r-f-1}} =\sum_e \frac{\chi_\pi(e)-\deg \pi}{2^{f+1}}d_e^2, 
\eeq
  so by taking square roots, we arrive at
\beq
w_{2^f}(\pi)^{2^{r-f-2}}=\sum_e \frac{\chi_\pi(e)-\deg \pi}{2^{f+1}}d_e.
\eeq

\end{proof}

\subsection{Other SWCs}

In this section we gather tools which permit the calculation of the SWCs $w_k(\pi)$, when $k$ is not a power of $2$. The main one is Wu's Formula:
\begin{prop} \cite[page 94]{mch} 
 For a representation $\pi$ of a group $G$, and $i \geq 1$ we have
 \begin{equation} \label{wenjun}
 \Sq^i(w_m(\pi))=w_i(\pi) w_m(\pi) + \binom{i-m}{1}w_{i-1}(\pi)w_{m+1}(\pi)+ \cdots + \binom{i-m}{i} w_0(\pi)w_{m+i}(\pi).
 \end{equation}
  \end{prop}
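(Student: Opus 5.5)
The statement is the classical Wu formula for vector bundles, specialized to $\mc V_\pi$ over a classifying space of $G$. I would prove it by the splitting principle followed by a symmetric-function identity mod $2$. Both $\Sq^i$ and SWCs are natural under pullback. Let $p\colon F(\mc V_\pi)\to BG$ be the associated flag bundle. Then $p^*$ is injective on mod $2$ cohomology, and $p^*\mc V_\pi\cong L_1\oplus\cdots\oplus L_n$ is a sum of real line bundles. So it suffices to prove the identity when $w(\pi)=\prod_{j=1}^n(1+x_j)$ with $|x_j|=1$, and in fact as an identity in the polynomial ring $(\Z/2\Z)[x_1,\ldots,x_n]$. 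There $w_m=e_m(x)$ is the $m$-th elementary symmetric polynomial.

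Next I would compute the left side. The Cartan formula and $\Sq(x_j)=x_j+x_j^2$ give
\beq
\Sq(e_m)=\sum_{|J|=m}\prod_{j\in J}x_j(1+x_j),
\eeq
whose degree $m+i$ part is
\beq
\Sq^i(e_m)=\sum_{|J|=m}x_J\,e_i(x_J)=\sum_{I\subset J,\ |I|=i,\ |J|=m}x_I^2x_{J\setminus I}.
\eeq
Thus the monomial $x_A^2x_B$, with $A,B$ disjoint, appears with coefficient $1$ if $|A|=i$ and $|B|=m-i$, and with coefficient $0$ otherwise.

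For the right side, $e_{i-t}e_{m+t}$ contains the monomial $x_A^2x_B$, with $|A|=a$ and $|B|=N:=m+i-2a$, with coefficient $\binom{N}{i-t-a}$. This counts the ways to distribute $B$ between the two factors. Here the binomials $\binom{i-m}{t}$ are generalized binomial coefficients, which is the natural reading since $i-m$ is typically negative. The coefficient of $x_A^2x_B$ on the right is therefore
\beq
\sum_t\binom{i-m}{t}\binom{m+i-2a}{i-a-t}=\binom{2i-2a}{i-a},
\eeq
by the Chu--Vandermonde identity, which holds for arbitrary upper parameters. The term $\binom{0}{0}w_iw_m$ is the $t=0$ summand, and terms with $t>i$ or with $i-t<0$ contribute zero.

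Finally, $\binom{2n}{n}$ is even for $n>0$ (e.g.\ by Lucas, or by the symmetry $\binom{2n-1}{n-1}=\binom{2n-1}{n}$). It equals $1$ when $n=0$, that is, when $a=i$, and then $|B|=m-i$ automatically. So the two coefficients agree mod $2$, which proves \eqref{wenjun}. The main step needing care is this coefficient comparison. In particular, one must check that the range of $t$ in the stated formula matches the full Vandermonde sum, and that the generalized binomials are interpreted consistently (for instance $\binom{i-m}{t}\equiv\binom{m-i+t-1}{t}$ mod $2$, matching the classical form of Wu's formula). The remaining steps are formal consequences of naturality and the splitting principle.
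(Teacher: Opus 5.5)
Your proof is correct. The splitting-principle reduction over $BG$ is standard, via injectivity of $p^*$ for the flag bundle by Leray--Hirsch. The individual steps also check out:
\begin{itemize}
\item The coefficient of $x_A^2x_B$ in $e_{i-t}e_{m+t}$ is $\binom{|B|}{i-t-a}$; the degree forces $|B|=m+i-2a$.
\item The terms with $t>i-a$ vanish, so the stated range $0\le t\le i$ really is the full Chu--Vandermonde range.
\item $\binom{2(i-a)}{i-a}$ is odd exactly when $a=i$. That case forces $|B|=m-i$, which matches the left side, and when $i>m$ both sides are $0$.
\end{itemize}

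The paper itself gives no proof of this statement. It quotes Wu's formula from \cite{mch} and only adds the remark that $\binom{i-m}{t}$ must be read as a generalized binomial coefficient, since typically $i<m$. Your argument supplies what the paper outsources, and it gains two things. First, it checks directly, for all $i\geq 1$ and $m\geq 0$ at once, that the generalized-binomial reading is the right one, since $\binom{i-m}{t}=(-1)^t\binom{m-i+t-1}{t}$ agrees mod $2$ with the classical coefficients. Second, it settles the whole coefficient comparison with a single Chu--Vandermonde evaluation. The citation buys brevity, which suits the paper, since Wu's formula is used there only as a computational tool, for instance to get $w_3=\Sq^1 w_2+w_1w_2$ in the third-SWC section.
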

 
 Often $i<m$, so we must understand the notation $\binom{x}{k}=\frac{x(x-1)\cdots(x-k+1)}{k!}$ when $x$ is negative.
 For example, $w_3(\pi)=\Sq^1(w_2(\pi))+w_1(\pi) \cup w_2(\pi)$. 
 
 Similarly one may use \eqref{wenjun} to express a given $w_j(\pi)$ in terms of $\Sq^{t}(w_{2^k}(\pi))$, with $t<2^k< j$, and SWCs $w_i(\pi)$ with $i<j$.
 Steenrod squares on the Dickson invariants have been computed. Below we write $d_i$ for $d_i(E)$.
\begin{prop}  \label{wilk} \cite[Corollary 2.4]{wilkerson} Let $k \geq 0$. Then
\begin{enumerate}
\item $\Sq^{2^k} d_i=0$ unless $i=r-1-k$ or $r = k+1$
\item $\Sq^{2^k} d_{r-1-k}=d_{r-k}$.
\item $\Sq^{2^k} d_i=d_1d_i$ if $r=k+1$.
\end{enumerate}
\end{prop}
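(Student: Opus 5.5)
Since this is \cite[Corollary 2.4]{wilkerson}, one may simply cite it. For a self-contained argument, I would use the classical generating polynomial of the Dickson invariants. Adjoin a formal class $X$ of degree $1$ and consider
\beq
f(X)=\prod_{v \in E^\vee}(X+v)=\sum_{i=0}^{r} d_i X^{2^{r-i}}, \qquad d_0=1.
\eeq
This identity holds because $f$ is an additive ($\F_2$-linearized) polynomial in $X$ whose roots are exactly $E^\vee$. Its coefficients are the homogeneous pieces of $\mc D(E)$ up to the substitution $X=1$. In particular $f(1)=\mc D(E)=\sum_i d_i$. Let $\Sq=\sum_j \Sq^j$ be the total Steenrod square. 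It is a ring homomorphism with $\Sq(u)=u+u^2$ for every degree-one class $u$, including $X$.

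\textbf{Step 1 (apply $\Sq$ to the product).} We have
\beq
\Sq f(X)=\prod_v (X+v)(1+X+v)=f(X)\,f(X+1)=f(X)\bigl(f(X)+\mc D(E)\bigr),
\eeq
where the last equality uses additivity of $f$. Hence $\Sq f(X)=f(X)^2+\mc D(E) f(X)$.

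\textbf{Step 2 (apply $\Sq$ coefficientwise).} Write $S_i=\Sq(d_i)$. Then
\beq
\Sq f(X)=\sum_i S_i\bigl(X^{2^{r-i}}+X^{2^{r-i+1}}\bigr).
\eeq
Both sides of Step 1 are linear combinations of the monomials $X^{2^j}$, and these are linearly independent over $\HH^*(E)$. Comparing the coefficients of $X^{2^{r-i-1}}$ therefore gives the recursion
\beq
S_{i+1}+S_i=d_{i+1}^2+d_i\,\mc D(E), \qquad S_0=1 .
\eeq
We also have $S_{r+1}=0$, which serves as a consistency check from the coefficient of $X$.

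\textbf{Step 3 (extract homogeneous components).} I would solve the recursion to get $S_i=\sum_{j\le i}\bigl(d_j^2+d_{j-1}\mc D(E)\bigr)$, or argue inductively on $i$. Then I would read off the piece of $S_i$ in degree $|d_i|+2^k$, where $|d_i|=2^r-2^{r-i}$. The relevant products $d_a d_b$ and squares $d_a^2$ have degrees of the form $2^{r+1}-2^{r-a}-2^{r-b}$. The claim is that such a degree equals $2^r-2^{r-i}+2^k$ only in two situations:
\begin{enumerate}
\item $k=r-i-1$, where the contribution is the term $d_{i+1}$ coming from $d_{i+1}^2$ and the linear part. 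This gives $\Sq^{2^k}d_{r-1-k}=d_{r-k}$.
\item $2^k=2^{r-1}=|d_1|$, i.e.\ $r=k+1$, where the contribution is $d_1 d_i$. This gives $\Sq^{2^k}d_i=d_1d_i$.
\end{enumerate}
In all other cases the component vanishes.

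\textbf{Main obstacle.} I expect the main difficulty to be this degree bookkeeping. One must see the cancellations mod $2$ that remove the unwanted terms, namely the pairs of equal contributions from $d_j^2$ and from $d_{j-1}d_{j'}$ at the same degree. I would first test the bookkeeping in the cases $r=2$ and $r=3$ using the explicit formulas in \eqref{rk3.dickson}, and then carry out the general degree comparison by writing degrees in binary.
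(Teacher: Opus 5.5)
Your argument is correct, and it takes a different route from the paper, which gives no proof and simply quotes \cite[Corollary 2.4]{wilkerson}.

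Steps 1 and 2 are sound:
\begin{itemize}
\item applying the total square to $f(X)=\prod_{v\in E^\vee}(X+v)$;
\item using $\Sq(X+v)=(X+v)(1+X+v)$;
\item using additivity of $f$ to write $f(X+1)=f(X)+\mc D(E)$.
\end{itemize}
Your recursion and its solution $S_i=\sum_{j\le i}d_j^2+\mc D\sum_{j<i}d_j$ are right. One small slip: the recursion comes from the coefficient of $X^{2^{r-i}}$, not of $X^{2^{r-i-1}}$.

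Compared with the citation, your route buys a closed formula for the whole total square of $d_i$. That gives every $\Sq^j d_i$ at once, not only $j=2^k$. It would make the paper's later remark about recovering other Steenrod powers via Adem relations unnecessary.

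Step 3 is still unproven, and your claim that degree $|d_i|+2^k$ occurs ``only in two situations'' is false for the unsimplified expansion. For $r=3$, $i=2$, $k=1$, the square $d_1^2$ and the product $d_1\cdot d_1$ inside $\mc D d_1$ both lie in degree $8$ and cancel. Rather than tracking such cancellations term by term, factor first. Put $A=\sum_{a<i}d_a$ and $B=\sum_{b\ge i}d_b$. Then $\mc D=A+B$ and $\sum_{j\le i}d_j^2=A^2+d_i^2$ in characteristic $2$, so
\[
\Sq(d_i)=d_i^2+AB=d_i^2+\sum_{0\le a<i\le b\le r}d_a d_b .
\]
Now take $1\le i\le r$. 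A term $d_ad_b$ lies in degree $|d_i|+2^k$ iff
\[
2^{r-a}+2^{r-b}+2^k=2^r+2^{r-i}.
\]
\begin{itemize}
\item We have $r-a>r-i\ge r-b$, and the right side has exactly two binary digits. So $2^k$ must equal one of the other two powers, and the doubled power must not merge with the remaining one.
\item $k=r-b$ forces $a=0$, $b=i+1$, $k=r-1-i$. This gives the term $d_{i+1}$.
\item $k=r-a$ forces $a=1$, $b=i$, $k=r-1$, which needs $i\ge 2$. This gives the term $d_1d_i$.
\item The term $d_i^2$ has degree $|d_i|+2^k$ only when $2^r-2^{r-i}$ is a power of $2$, i.e.\ $i=1$ and $k=r-1$. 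It then supplies $d_1d_1$.
\end{itemize}
In the relevant degree there is at most one term, so nothing cancels, and (1)--(3) follow.

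Two further corrections:
\begin{itemize}
\item In case (1), the $d_{i+1}$ comes from $d_0d_{i+1}$, i.e.\ from $\mc D$ itself. It does not come from $d_{i+1}^2$, which does not occur in $S_i$.
\item The statement implicitly requires $1\le i\le r$. For $i=0$ your formula correctly gives $\Sq(d_0)=1$, whereas (3) read literally would give $\Sq^{2^{r-1}}1=d_1$.
\end{itemize}
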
 

 Other Steenrod powers can be computed by combining the above with  the Adem-Wu relations. In view of the exponent appearing in Theorem \ref{summer}, we also note the following, which follows from the Cartan formula.

\begin{lemma}  \label{blum}
If $b \in \HH^*(G)$ and $\ord(i)<\ord(j)$, then
\beq
\Sq^i(b^j)=0.
\eeq
We have, for all $u$,
\beq
\Sq^{2^ku}(b^{2^k})=\Sq^u(b)^{2^k}.
\eeq
\end{lemma}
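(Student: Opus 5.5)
Both claims follow from the Cartan formula, packaged through the total Steenrod square. Here I read $\ord(i)$ as the $2$-adic valuation of the integer $i$, with $\ord(0)=\infty$. Write $\Sq=\sum_{t\geq 0}\Sq^t$ for the total square on $\HH^*(G)$. The Cartan formula says exactly that $\Sq$ is a ring homomorphism, $\Sq(xy)=\Sq(x)\Sq(y)$.

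Since we work with $\Z/2\Z$ coefficients, $\HH^*(G)$ is a genuinely commutative ring of characteristic $2$. Hence the Frobenius $x\mapsto x^2$ is additive, and so is $x\mapsto x^{2^k}$. Applying this to $c\in\HH^*(G)$ gives
\begin{equation*}
\Sq\bigl(c^{2^k}\bigr)=\Sq(c)^{2^k}=\Bigl(\sum_{t\ge 0}\Sq^t(c)\Bigr)^{2^k}=\sum_{t\geq 0}\Sq^t(c)^{2^k}.
\end{equation*}
We then compare homogeneous components. The term $\Sq^t(c)^{2^k}$ has degree $2^k|c|+2^k t$, while $\Sq^s(c^{2^k})$ has degree $2^k|c|+s$. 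So $\Sq^{s}(c^{2^k})=0$ unless $2^k\mid s$, and for $s=2^k u$ we get $\Sq^{2^k u}(c^{2^k})=\Sq^u(c)^{2^k}$.

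Taking $c=b$ gives the second assertion immediately. For the first, write $j=2^a m$ with $m$ odd, so $a=\ord(j)$, and set $c=b^m$, so that $b^j=c^{2^a}$. The hypothesis $\ord(i)<\ord(j)=a$ says $2^a\nmid i$, and the vanishing statement above gives $\Sq^i(b^j)=0$.

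I expect no real obstacle. The only points to state carefully are the commutativity of the mod-$2$ cohomology ring, which justifies the binomial expansion, and the degree bookkeeping in the comparison of homogeneous parts.
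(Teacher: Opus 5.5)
Your proof is correct and takes the same approach as the paper, which simply says the lemma follows from the Cartan formula; your write-up spells that out by using multiplicativity of the total square $\Sq$, additivity of $x\mapsto x^{2^k}$ in characteristic $2$, and the reduction $b^j=(b^m)^{2^{\ord(j)}}$. Your degree comparison tacitly takes $b$ homogeneous, as it is in every application in the paper, and the inhomogeneous case follows by applying your argument to each homogeneous component of $b^m$ and using additivity of $x\mapsto x^{2^k}$ once more.
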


 In the situation of Theorem \ref{summer}, it is enough to compute Steenrod squares on $\mc D_e^{2^{f-r+1}}$.
By Lemma \ref{blum}, if $\Sq^i(\mc D_e^{2^{f-r+1}})$ is nonzero, then $f-r+1 \leq \ord_2(i) \leq f$,
and in that case the computation reduces to Steenrod squares on $d_E$ and $d_e$.
 
 One can similarly treat the situation of Corollary \ref{schmabel}; we omit the details.
 
\begin{prop} \label{machiav} Suppose $E$ has rank $2$ and $\pi$ is achiral. Then
  \begin{equation} \label{w3.ach}
  w_3(\pi)=\left(\sum_{e \in E} \frac{\deg \pi- \chi_\pi(e)}{4} \right)d_2(E).
  \end{equation}
  \end{prop}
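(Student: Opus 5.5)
The plan is to reduce $w_3$ to $w_2$ via Wu's formula, use the rank-$2$ formula for $w_2$ from the Example following Theorem \ref{summer}, and then compute $\Sq^1$ on the classes $\mc D_e$ using Proposition \ref{wilk}.

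First, I check that achirality forces $\ord(\pi) \geq 1 = r-1$, so that Theorem \ref{summer} applies with $r=2$. For $e \in E$, the number of eigenvalues $-1$ of $\pi(e)$ is $\frac{\deg \pi - \chi_\pi(e)}{2}$. Since $\det \pi(e) = 1$, this number is even, so $\chi_\pi(e) \equiv \deg \pi \pmod 4$ for all $e$. Hence $w_1(\pi) = 0$, and the Example after Theorem \ref{summer} gives
\beq
w_2(\pi) = \sum_e \frac{\deg \pi - \chi_\pi(e)}{4}\, \mc D_e ,
\eeq
with signs irrelevant mod $2$.

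This holds whether $f = \ord(\pi)$ equals $1$ or is larger. If $f=1$, it is exactly Theorem \ref{summer}. If $f \geq 2$, then $w_2(\pi) = 0$ by Theorem \ref{summer}. In that case every coefficient $\frac{\deg\pi-\chi_\pi(e)}{4}$ is even, so the right side also vanishes.

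Next, Wu's formula \eqref{wenjun} gives $w_3(\pi) = \Sq^1 w_2(\pi) + w_1(\pi) w_2(\pi) = \Sq^1 w_2(\pi)$. So it suffices to compute $\Sq^1 \mc D_e$ for each $e$, where $\mc D_e = d_E + d_e^2$ and $d_0 = 0$.

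By Lemma \ref{blum} (or simply the Cartan formula), $\Sq^1(d_e^2) = 0$. By Proposition \ref{wilk} with $k=0$ and $r=2$, we have $i = r-1-k = 1$, so $\Sq^1 d_1(E) = d_2(E)$. Therefore $\Sq^1 \mc D_e = d_2(E)$ for every $e \in E$, including $e=0$. Summing over $e$ gives
\beq
w_3(\pi) = \left(\sum_e \frac{\deg \pi - \chi_\pi(e)}{4}\right) d_2(E),
\eeq
which is \eqref{w3.ach}.

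The only delicate point is the bookkeeping in the case $f \geq 2$, where one must observe that both sides vanish. Beyond that, I expect the main check to be the application of Proposition \ref{wilk}: making sure the indexing convention $d_i = d_i(E)$, of degree $2^r - 2^{r-i}$, indeed yields $\Sq^1 d_1 = d_2$ when $r=2$. One can confirm this directly from $d_1 = v_1^2 + v_1 v_2 + v_2^2$. Applying $\Sq^1$ gives $v_1^2 v_2 + v_1 v_2^2 = d_2$.
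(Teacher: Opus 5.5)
Your proof is correct and follows the paper's argument. Wu's formula reduces $w_3(\pi)$ to $\Sq^1 w_2(\pi)$ because $w_1(\pi)=0$; then you take the rank-$2$ formula for $w_2$ and use $\Sq^1(d_e^2)=0$ together with $\Sq^1 d_E = d_2(E)$ from Proposition \ref{wilk}. Your extra checks are valid details the paper leaves implicit: that achirality forces $\ord(\pi)\geq 1$, and that both sides vanish when $\ord(\pi)\geq 2$.
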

 
 \begin{proof} 
 We must compute  $w_3(\pi)=\Sq^1(w_2(\pi))$, where
\beq
w_2(\pi)=\sum_{e \in E} \frac{\deg \pi- \chi_\pi(e)}{4} (d_E+d_e^2),
\eeq
By the Cartan formula (or Lemma \ref{blum}), $\Sq^1(d_e^2)=0$, and by Proposition \ref{wilk}, 
  $\Sq^1(d_E)=d_2(E)$.  
 \end{proof}

 \section{Vanishing Orders for the Symmetric Group} \label{sec: VOSG}
 \subsection{Vanishing Order}
We begin with an easy but motivational proposition.
For $n \geq 2$, recall that $\iota_1$ is the transposition $(12) \in S_n$. 
 \begin{prop} \label{motivational} For a representation $\pi$ of $S_n$,
 the following are equivalent:
 \begin{enumerate}
 \item $w(\pi)=1$.
 \item The restriction of $\pi$ to $S_2$ is trivial.
  \item $\chi_\pi(\iota_1)=\deg \pi$.
   \item $\pi$ is trivial.
 \end{enumerate}
 \end{prop}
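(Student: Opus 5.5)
The plan is to run the cycle of implications (4)$\Rightarrow$(1)$\Rightarrow$(2)$\Rightarrow$(3)$\Rightarrow$(4). Three of these are formal; (3)$\Rightarrow$(4) is the only step with content.

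First, (4)$\Rightarrow$(1) holds because a trivial representation is a direct sum of copies of $\mb 1$, and $w(\mb 1)=1$. Next, (1)$\Rightarrow$(2): SWCs are natural under restriction, so $w(\pi|_{S_2})$ is the restriction of $w(\pi)=1$, hence equals $1$. Now $S_2\cong C_2$, and part (2) of Proposition \ref{c2.vanishing.order} shows that $\pi|_{S_2}$ is trivial. Then (2)$\Rightarrow$(3) is immediate, since a trivial representation of $S_2$ has character equal to its degree at $\iota_1=(12)$.

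For (3)$\Rightarrow$(4), consider the operator $\pi(\iota_1)$. It is an involution, so it is diagonalizable with eigenvalues $\pm1$, and its trace $\chi_\pi(\iota_1)$ equals $\deg\pi$ only if every eigenvalue is $1$, i.e.\ $\pi(\iota_1)=\mathrm{id}$. Every transposition is conjugate to $\iota_1$ in $S_n$, so $\pi(\tau)=\mathrm{id}$ for every transposition $\tau$. The transpositions generate $S_n$, so $\pi$ is trivial.

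I do not expect a real obstacle. The only point needing care is justifying that $\pi(\iota_1)$ is the identity from the trace condition. Diagonalizability of a finite-order real orthogonal (or unitary) matrix handles this; alternatively one can observe that the multiplicity of $\sgn$ in $\pi|_{S_2}$ is $(\deg\pi-\chi_\pi(\iota_1))/2=0$. For $n=1$ the statements are vacuous or trivially true, and the proposition as stated assumes $n\ge 2$.
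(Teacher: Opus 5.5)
Your proposal is correct and follows the paper's proof: restrict to $S_2$, apply Proposition \ref{c2.vanishing.order}, and note that $\iota_1$ lies in the normal subgroup $\ker\pi$, which therefore contains every transposition. The paper leaves implicit the step (4)$\Rightarrow$(1) and the eigenvalue/trace argument, and you spell both out.
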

 
 \begin{proof} 
 If $w(\pi)=1$, then also $w^{S_2}(\pi)=1$. By Proposition \ref{c2.vanishing.order}, the restriction of $\pi$ to $S_2$ is trivial, which is equivalent to $\chi_\pi(\iota_1)=\deg \pi$. In this case, $\iota_1 \in \ker \pi$ and therefore $\pi$ is trivial. 
 \end{proof}

 \begin{defn} Let $\pi$ be a nontrivial representation of $S_n$. 
We define
 \beq
 \ord(\pi)=\max \left\{ k \geq 0 :   \forall 1 \leq i \leq   \min \left(2^k, \left\lfloor \frac{n}{2} \right \rfloor \right), \chi_\pi(\iota_i) \equiv \deg \pi \mod 2^{k+1} \right\}.
 \eeq
 When $\pi$ is trivial, put $\ord(\pi)=\infty$.
\end{defn}
 
 In other words, it is the maximum $k$ so that $2^{k+1}$ divides $\deg \pi-\chi(\iota)$ for all involutions $\iota \in S_n$ with at most $2^k$ transpositions.
 
 \begin{example} For the standard representation $\pi_{\st}$, we have $\ord(\pi_{\st})=0$. For an integer $a \geq 0$, we have $\ord\left(\pi_{\st}^{\oplus 2^a} \right)=a$.
 \end{example}

  Now we deduce our second main theorem.
  
\begin{proof} (of Theorem \ref{thm2})
Let $f=\ord(\pi)$ and $1 \leq i<2^f$; we must show that $w_i(\pi)=0$. 
Suppose first that $n \geq 2i$. By Theorem \ref{thm: nakaoka-stability}, it is enough to prove that 
$w^{S_{2i}}_i(\pi)=0$.  If $E<S_{2i}$ is an EA2G, then every $e \in E$ is conjugate to some $\iota_j$ with $0 \leq j \leq  i$. 
 By the definition of $\ord(\pi)$, we have $\chi_\pi(e) \equiv \deg \pi \mod 2^{f+1}$ for all $e \in E$, hence $\ord(\pi|_E) \geq f$.
Therefore $w_{i}^E(\pi)=0$  by  Corollary \ref{schmabel}, and then $w_i^{S_{2i}}(\pi)=0$ by Theorem \ref{Es.detect}. 
\newline
If instead $n<2i$, then any involution in $S_n$ is conjugate to an $\iota_j$ with $0 \leq j<i$, so the same argument goes through.

Suppose also $w_{2^{f}}(\pi)=0$.   
By Corollary \ref{binom.mod.2}, $\chi_\pi(\iota) \equiv \deg(\pi) \mod 2^{f+2}$ for any involution $\iota \in S_n$. 
 Hence $\ord(\pi)>f$, a contradiction. \end{proof}

    \subsection{Elementary Abelian $2$-groups of $S_n$} \label{EA2Gs.sn}
     
The action of $C_2^k$ on itself by translation gives an injection $C_2^k \hookrightarrow \Perm(C_2^k)$. We identify  $\Perm(C_2^k)$ with $S_{2^k}$, and write $E_k<S_{2^k}$ for the image of this injection. It is well-defined up to conjugacy. Then $E_k$ is a maximal elementary abelian $2$-subgroup, and  every nonidentity element of $E_k$ is conjugate to $\iota_{2^{k-1}}$.
 
 For a tuple of positive integers ${\bm d}=(d_1, \ldots, d_r)$ be an $r$-tuple of positive integers, write $2^{\bm d}=2^{d_1}+ \cdots + 2^{d_r}$. 
 When $2^{\bm d}=n$, we have inside the Young subgroup
 \beq
 S_{2^{d_1}} \times \cdots \times S_{2^{d_r}} < S_n, 
 \eeq
 the subgroup
 \beq
 E_{\bm d}=E_{d_1} \times \cdots \times E_{d_r} < S_n.
 \eeq
  Note that the rank of $E_{\bm d}$ is $\sum d_i$. When $n$ is odd, we similarly have subgroups $E_{\bm d} <S_{n-1}<S_n$ when $2^{\bm d}=n-1$.
 
 \begin{prop} \cite[page 179]{ademmil} Let $n$ be a positive integer. Every maximal elementary $2$-subgroup of $S_n$ is conjugate to some $E_{\bm d}$, with $2^{\bm d}=n$ when $n$ is even, or $2^{\bm d}=n-1$ when $n$ is odd.
 \end{prop}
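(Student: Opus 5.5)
The plan has two halves: first show that each $E_{\bm d}$ is a maximal elementary abelian $2$-subgroup, then show that every elementary abelian $2$-subgroup of $S_n$ lies in a conjugate of some $E_{\bm d}$.

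\textbf{Reduction to free orbits.} Let $A<S_n$ be an elementary abelian $2$-group. Decompose $\{1,\ldots,n\}$ into $A$-orbits $O_1,\ldots,O_s$. The image $A_j$ of $A$ in $\Perm(O_j)$ is abelian and transitive on $O_j$, so it acts simply transitively. Hence $O_j$ is a torsor for $A_j\cong C_2^{c_j}$ with $|O_j|=2^{c_j}$. Choosing a base point identifies $O_j$ with $A_j$ acting by translation. Thus, after conjugation, $A_j$ equals $E_{c_j}$ inside $S_{2^{c_j}}$, and
\beq
A<A_1\times\cdots\times A_s=E_{c_1}\times\cdots\times E_{c_s}.
\eeq
So $A$ lies in a conjugate of $E_{\bm c}$, where $\bm c$ is allowed to contain zeros (fixed points).

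\textbf{Absorbing fixed points.} A zero entry $c_j=0$ corresponds to a fixed point. Two such fixed points can be merged into an orbit of size $2=2^1$, since $E_0\times E_0<E_1$. After pairing them up, at most one fixed point remains, and it remains exactly when $n$ is odd. This gives a tuple of positive integers with $2^{\bm d}=n$ or $2^{\bm d}=n-1$, as claimed. Consequently every maximal $A$ is conjugate to one of these $E_{\bm d}$.

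\textbf{Maximality of $E_{\bm d}$.} It is enough to show that $E_{\bm d}$ is self-centralizing apart from the possible fixed point. Suppose $g$ commutes with $E_{\bm d}$. Then $g$ permutes the $E_{\bm d}$-orbits, but only orbits on which $E_{\bm d}$ acts through the same quotient can be exchanged. These quotients are distinct because of the product structure, unless $d_i=d_j$.

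This is the main obstacle: an element $g$ swapping two blocks of equal size can centralize, or at least normalize, the group. I would check carefully that such a swap does \emph{not} centralize, since it conjugates $E_{d_i}\times 1$ to $1\times E_{d_j}$. Hence $g$ preserves each orbit and commutes with the regular action there. The centralizer of the regular representation of $C_2^{d}$ in $S_{2^d}$ is the group itself, acting by right translation, which coincides with left translation for an abelian group. So $g\in E_{\bm d}$, or $g$ is the transposition-free identity on the fixed point.

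Any elementary abelian group containing $E_{\bm d}$ centralizes it, so it equals $E_{\bm d}$. Conversely, the first half shows that every elementary abelian $2$-subgroup embeds in some $E_{\bm d}$ up to conjugacy.
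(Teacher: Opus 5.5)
Your proof is correct. The paper gives no argument of its own, only the citation to Adem--Milgram, and your orbit decomposition is the standard proof behind that reference: a transitive abelian permutation group is regular, so $A$ sits inside $E_{c_1}\times\cdots\times E_{c_s}$; fixed points are then paired into transpositions, leaving one exactly when $n$ is odd.

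The maximality half goes beyond what the statement asks. There, the caveat ``unless $d_i=d_j$'' is unnecessary. If $g$ centralizes $E_{\bm d}$, then the kernel $\prod_{k\neq i}E_{d_k}$ of the action on $O_i$ must equal the kernel of the action on $g(O_i)$. These kernels are distinct for distinct orbits because every $d_k\geq 1$, so $g$ preserves each orbit regardless of block sizes.
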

 
One always has ${\bm 1}=(1, \ldots, 1)$ with $\lfloor n/2 \rfloor$ parts; then $E_{\bm 1}$ is the subgroup of $S_n$ generated by $\iota_1, \ldots, \iota_{\lfloor n/2 \rfloor}$. 
Note that $E_{\bm d} \leq A_n \Leftrightarrow$  each $d_i>1$. When $\pi$ is a representation of $S_n$, and $2^{\bm d}=n$, write $w^{\bm d}(\pi) \in \HH^*(E_{\bm d})$ for the restriction to $E_{\bm d}$ of the SWC of $\pi$.

Let $n \geq 2k$. The sum of the restriction maps
\beq
\HH^k(S_n) \to \bigoplus_{\bm d: 2^{\bm d}=2k} \HH^k(E_{\bm d})
\eeq
is injective. Hence computing each $w^{\bm d}_k(\pi)$ is equivalent to computing $w_k(\pi)$.
 
 \begin{Scholium}  \label{first.nz.comp} Let $\pi$ be a representation of $S_n$.  For $\bm d$ with $2^{\bm d}\leq n$, put $f_{\bm d}=\ord(\pi|_{E_{\bm d}})$, as in Definition \ref{order.ea2g}. If $f=\ord(\pi)$, and $n \geq 2^{f+1}$, then also $f=\min( f_{\bm d} \mid 2^{\bm d}=2^{f+1})$. When $f_{\bm d}>f$, then $w_{2^f}^{\bm d}(\pi)=0$. When $f_{\bm d}=f$, then $w_{2^f}^{\bm d}(\pi)$ is computed in Proposition \ref{summer} or Corollary \ref{schmabel}.
In this sense, we have computed the first nonvanishing SWC $w_{2^f}(\pi)$.
 \end{Scholium}
  
  Our next goal is to compute explicitly the SWCs up to $w_4$.

\subsection{First and Second SWC} \label{w1and2}

The first SWC is determined by the restriction to $S_2$. Let $v \in \HH^1(S_2)$ be the nontrivial element.
Then
\beq
w_1^{{S_2}}(\pi)=\frac{\deg \pi-\chi_\pi(\iota_1)}{2} v.
\eeq

The second SWC is detected by the restriction to $S_4$. Here the EA2Gs are $E=E_{\bf 1}$ and $E_{(2)}$, 
both with rank $2$.
When $\ord(\pi)=1$, we have
\beq
w_2^E(\pi)=\sum_{e \in E} \frac{\deg \pi -\chi_\pi(e)}{4} \mc D_e.
\eeq

 We have $d_{e_1}=v_2$, $d_{e_2}=v_1$, and $d_{e_1+e_2}=v_1+v_2$. Moreover $d_E=v_1^2+v_1v_2+v_2^2$.
 Hence $\mc D_{e_1}=v_1^2+v_1v_2$, and $\mc D_{e_2}=v_1v_2+v_2^2$, and $\mc D_{e_1+e_2}=v_1v_2$.
 This gives
 \beq
  w_2^{\bf 1}(\pi)= \frac{\deg \pi -\chi_\pi(\iota_1)}{4} (v_1^2+v_2^2)+   \frac{\deg \pi -\chi_\pi(\iota_2)}{4} (v_1v_2).
  \eeq
Since all non-identity elements of $E_{(2)}$ are conjugate in $S_4$ to $\iota_2$, we obtain
 \begin{equation} \label{w2ellip}
 w_2^{(2)}(\pi)= \frac{\deg \pi - \chi_\pi(\iota_2) }{4} d_{E_{(2)}},
 \end{equation}
using Proposition \ref{sum.de.vanish}.

\bigskip

On the other hand, if $\ord(\pi)=0$ then $\det \pi=\sgn$. Since $w_1^{\bf 1}(\sgn)=v_1+v_2$ and $w_1^{(2)}(\sgn)=0$, this gives
 \begin{equation} \label{canpake}
  w_2^{\bf 1}(\pi)=     \frac{\deg \pi - \chi_\pi(\iota_1)-2}{4} (v_1^2+v_2^2)+   \frac{\deg \pi -\chi_\pi(\iota_2)}{4} (v_1v_2),
  \end{equation}
and $w_2^{(2)}(\pi)$ is again given by \eqref{w2ellip}.

Put $\rho_1=\sgn \oplus \sgn$ and $\rho_2 =\pi_{\st} \oplus \sgn$. Note that $w_2(\rho_2)=w_2(\pi_{\st})+w_1(\sgn)^2$.
Since $\ord(\rho_1)=\ord(\rho_2)=1$, and
 $\HH^2(S_4)$ is $2$-dimensional (e.g., \cite[Section VI.1, page 185]{ademmil}), we can say:

\begin{prop} \label{eps-rho} Let $\pi$ be a representation of $S_n$ with $\ord(\pi) \geq 1$. There exist unique  $a_1,a_2 \in \{0,1\}$ so that if we set 
\beq
\pi^+=\pi \oplus a_1 \rho_1  \oplus a_2 \rho_2,
\eeq
then $\ord(\pi^+) \geq 2$.
\end{prop}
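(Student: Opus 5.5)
The plan is to turn the character condition $\ord(\pi^+)\geq 2$ into a cohomological one using Theorem \ref{thm2}, and then use linear algebra in $\HH^2(S_n)$. Throughout I take $n\geq 4$, so that Theorem \ref{thm: nakaoka-stability} gives $\HH^2(S_n)\cong\HH^2(S_4)$, which is $2$-dimensional. For $n<4$, $\HH^2(S_n)$ is $1$-dimensional, so uniqueness cannot hold as stated and that range must be excluded or treated separately.

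\emph{Step 1: reformulate.} For a representation $\sigma$ of $S_n$, Theorem \ref{thm2} gives
\[
\ord(\sigma)\geq 2 \iff w_1(\sigma)=w_2(\sigma)=0 .
\]
Indeed, if $\ord(\sigma)=0$ then $w_1(\sigma)\neq 0$, and if $\ord(\sigma)=1$ then $w_2(\sigma)\neq 0$. Conversely, $\ord(\sigma)\geq 2$ forces $w_i(\sigma)=0$ for $1\leq i<4$. The trivial case $\ord=\infty$ is also consistent with this equivalence.

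\emph{Step 2: additivity.} Since $\ord(\pi)\geq 1$ and $\ord(\rho_1)=\ord(\rho_2)=1$, Theorem \ref{thm2} gives $w_1=0$ for $\pi$, $\rho_1$ and $\rho_2$. The Whitney sum formula then yields $w_1(\pi^+)=0$ and
\[
w_2(\pi^+)=w_2(\pi)+a_1w_2(\rho_1)+a_2w_2(\rho_2).
\]
The cross terms vanish because they involve $w_1$. So the statement reduces to showing that $w_2(\rho_1)$ and $w_2(\rho_2)$ form a basis of the $\Z/2\Z$-space $\HH^2(S_n)$. Once that is known, $w_2(\pi)$ has a unique expression $a_1w_2(\rho_1)+a_2w_2(\rho_2)$ with $a_i\in\{0,1\}$, which gives both existence and uniqueness.

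\emph{Step 3: the basis.} By Theorem \ref{thm2}, both classes $w_2(\rho_i)$ are nonzero, since each $\rho_i$ has order exactly $1$. I would quickly recheck that order from the characters at $\iota_1,\iota_2$. It remains to show that the two classes are distinct, which I would do by restricting to $S_4$ and then to $E_{(2)}$ via \eqref{w2ellip}.
\begin{itemize}
\item For $\rho_1$, we have $\deg\rho_1-\chi(\iota_2)=2-2=0$, so $w_2^{(2)}(\rho_1)=0$.
\item For $\rho_2$ restricted to $S_4$, we have $\deg\rho_2=5$ and $\chi_{\rho_2}(\iota_2)=0+1=1$, so the coefficient is $(5-1)/4=1$. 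Hence $w_2^{(2)}(\rho_2)=d_{E_{(2)}}\neq 0$.
\end{itemize}
Since $w_2(\rho_1)$ and $w_2(\rho_2)$ are nonzero and distinct, they are linearly independent, hence a basis of the $2$-dimensional space $\HH^2(S_n)$.

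The main point needing care is Step 3. One must interpret $\pi_{\st}$ consistently when restricting to $S_4$ (the restriction is the standard representation of $S_4$ plus trivial summands, which leaves the relevant differences $\deg-\chi$ unchanged), and one must verify that $\ord(\rho_2)=1$ for the given $n$. If that verification turned out to be delicate, I would instead compare $w_2^{\bm 1}(\rho_1)$ and $w_2^{\bm 1}(\rho_2)$ directly using the explicit formula for $w_2^{\bm 1}$ in Section \ref{w1and2}.
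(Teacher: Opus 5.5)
Your proposal is correct and follows the paper's own argument, which is only sketched there. That argument runs: $\ord(\pi^+)\ge 2$ iff $w_1(\pi^+)=w_2(\pi^+)=0$ by Theorem \ref{thm2}, $\ord(\rho_1)=\ord(\rho_2)=1$, and $\HH^2(S_4)$ is $2$-dimensional; your Whitney-sum step and your restriction to $E_{(2)}$ supply the additivity and the linear independence of $w_2(\rho_1),w_2(\rho_2)$ that the paper leaves implicit. Your caveat is also right: the paper tacitly assumes $n\ge 4$, and for $n\le 3$ one has $w_2(\rho_1)=w_2(\rho_2)$, so uniqueness genuinely fails there.
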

Please note: 
\begin{equation} \label{eps1.formulah}
a_1= \frac{\chi_{\pi}(\iota_2) - \chi_\pi(\iota_1)}{4} \mod 2,
\end{equation}
and
\begin{equation} \label{eps2.formulah}
a_2= \frac{\deg \pi - \chi_\pi(\iota_2)}{4} \mod 2.
\end{equation}
This will be useful in computing the fourth SWCs.

 \bigskip

Equation \eqref{w2ellip} generalizes:
\begin{prop} \label{elliptic.swc.here} For $E=E_{k+1}<S_{2^{k+1}}$, if $\ord(\pi)=k$, then  
\beq
w_{2^k}^{E}(\pi)=\frac{\deg \pi-\chi_\pi(\iota_{2^k})}{2^{k+1}} d_{E}.
\eeq
\end{prop}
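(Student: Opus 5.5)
We apply Theorem \ref{summer} to $E=E_{k+1}$, which has rank $r=k+1$. Every nonidentity element of $E_{k+1}$ is conjugate in $S_{2^{k+1}}$ (hence in $S_n$) to $\iota_{2^k}$. So the restriction $\pi|_E$ has character value $\chi_\pi(\iota_{2^k})$ at every $e \neq 0$, and value $\deg \pi$ at $e=0$.

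First we check the hypothesis $\ord(\pi|_E) \geq r-1 = k$. Since $\ord(\pi)=k$, the definition gives $\chi_\pi(\iota_i) \equiv \deg\pi \bmod 2^{k+1}$ for $1 \le i \le \min(2^k,\lfloor n/2\rfloor)$. In particular this holds for $i=2^k$, which is allowed because $n \ge 2^{k+1}$ (implicit in $E<S_{2^{k+1}}\le S_n$). Hence $\chi_\pi(e)\equiv \deg \pi \bmod 2^{k+1}$ for all $e\in E$, so $f':=\ord(\pi|_E)\ge k$.

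If $f'=k$, Theorem \ref{summer} with $f=k$, $r=k+1$ gives exponent $2^{f-r+1}=1$. Writing $c=\frac{\deg\pi-\chi_\pi(\iota_{2^k})}{2^{k+1}}$, we get
\[
w_{2^k}^E(\pi)=\sum_{e\neq 0} c\,\mc D_e = c\sum_{e\ne 0}(d_E+d_e^2) = c\Big((2^{k+1}-1)d_E + \big(\textstyle\sum_e d_e\big)^2\Big),
\]
using $d_0=0$ and Frobenius in characteristic $2$. By Proposition \ref{sum.de.vanish}, $\sum_e d_e=0$. Since $2^{k+1}-1$ is odd, this leaves $w^E_{2^k}(\pi)=c\,d_E$, as claimed. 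This matches the rank-$2$ computation \eqref{w2ellip}.

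If $f'>k$, Theorem \ref{summer} (applied with $f'$) gives $w_{2^k}^E(\pi)=0$, because $2^k<2^{f'}$. On the other hand, $\chi_\pi(\iota_{2^k})\equiv\deg\pi \bmod 2^{k+2}$, so $c$ is even and the right-hand side $c\,d_E$ also vanishes mod $2$. The formula therefore holds in both cases.

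The only delicate point is the hypothesis check and the degenerate case $f'>k$. The main computation is immediate once we observe that all nonzero $e$ carry the same coefficient, so that Proposition \ref{sum.de.vanish} kills the $d_e^2$ terms. One should also note that Proposition \ref{sum.de.vanish} requires $r\ge 2$. For $k=0$ (so $r=1$), $d_e$ has degree $1/2$ and is interpreted as $0$, and the formula reduces directly to the $C_2$ computation $w_1=\frac{\deg\pi-\chi_\pi(\iota_1)}{2}v$.
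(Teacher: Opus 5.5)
Your proof is correct and is the argument the paper intends. The paper gives no separate proof; it presents the statement as the generalization of \eqref{w2ellip}. That equation comes from applying Theorem \ref{summer} with $r=k+1$, using that every nonidentity element of $E_{k+1}$ is conjugate to $\iota_{2^k}$, and removing the $d_e^2$ terms with Proposition \ref{sum.de.vanish}, exactly as you do. Your separate handling of the case $\ord(\pi|_E)>k$ (where both sides vanish) and of $k=0$ fills in details the paper leaves implicit.
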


\section{Third SWC} \label{sec:w3}

  For $n \geq 6$,  the restriction $\HH^3(S_n)\to \HH^3(S_6)$ is an isomorphism by  \Cref{thm: nakaoka-stability}. Hence it suffices to compute the restriction of $w_3(\pi)$ to $S_6$.
 Wu's formula says that
\begin{equation} \label{wu3}
w_3(\pi)=\Sq^1(w_2(\pi))+ w_1(\pi) \cup w_2(\pi).
\end{equation}

Let $\mc W \subseteq \HH^3(S_6)$ be the sum of two subspaces: the image of $\Sq^1: \HH^2(S_6) \to \HH^3(S_6)$, and the product of $\HH^1(S_6)$ and $\HH^2(S_6)$. By \eqref{wu3}, all third SWCs lie in $\mc W$. 
Write $\Res: \HH^3(S_6) \to \HH^3(S_4)$ for the usual restriction map. To specify $w_3(\pi)$, it is enough to compute $\Res(w_3(\pi))$, because of the following lemma:

\begin{lemma} The map $\Res$ restricts to an injection $\mc W \to \HH^3(S_4)$.
\end{lemma}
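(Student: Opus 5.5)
The plan is a dimension count. I would bound $\dim \mc W\le 3$ using the spanning set coming from Wu's formula, and then show that the images of three spanning vectors in $\HH^3(S_4)$ are linearly independent. Independence will be checked by restricting further to the two EA2Gs $E_{\bf 1}$ and $E_{(2)}$ of $S_4$ and using the explicit formulas of Section \ref{w1and2}. Independent images force $\dim\mc W=3$ and make $\Res|_{\mc W}$ injective.

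\emph{Step 1: a spanning set.} By Theorem \ref{thm: nakaoka-stability}, $\HH^1(S_6)\cong\HH^1(S_4)$ and $\HH^2(S_6)\cong\HH^2(S_4)$, and the latter is $2$-dimensional. Let $v$ be the nonzero class in $\HH^1(S_6)$. I take as basis of $\HH^2(S_6)$ the classes $v^2$ and $u=w_2(\rho_2)$, where $\rho_2=\pi_{\st}\oplus\sgn$ (regarded on $S_6$).
\begin{itemize}
\item Since $\Sq^1(v^2)=0$ by the Cartan formula, the image of $\Sq^1$ is spanned by $\Sq^1 u$.
\item $\HH^1\cdot\HH^2$ is spanned by $v^3$ and $vu$.
\end{itemize}
Hence $\mc W=\langle v^3,\,vu,\,\Sq^1u\rangle$, so $\dim\mc W\le 3$. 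It therefore suffices to show that $\Res(v^3)$, $\Res(vu)$ and $\Res(\Sq^1 u)$ are linearly independent in $\HH^3(S_4)$. Restriction commutes with cup products and Steenrod squares, so I can compute these images by restricting $v$ and $u$ directly to the subgroups $E_{\bf 1}$ and $E_{(2)}$ of $S_4$.

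\emph{Step 2: restrictions to $E_{(2)}$.} Here $v$ restricts to $0$, since $E_{(2)}\le A_4$. From \eqref{w2ellip}, $u$ restricts to $c\,d_{E_{(2)}}$ with $c=\frac{\deg\rho_2-\chi_{\rho_2}(\iota_2)}{4}$. Since $\chi_{\pi_{\st}}(\iota_2)=n-4$ and $\sgn(\iota_2)=1$, this gives $c=1$. By Proposition \ref{wilk}, $\Sq^1 d_1=d_2$ in rank $2$, so $\Sq^1u$ restricts to $d_2(E_{(2)})\ne0$, while $v^3$ and $vu$ restrict to $0$.

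\emph{Step 3: restrictions to $E_{\bf 1}$.} Here $v$ restricts to $v_1+v_2$. By the formula for $w_2^{\bf 1}$ with $\ord(\rho_2)=1$, $u$ restricts to $a(v_1^2+v_2^2)+b\,v_1v_2$, where
\[
a=\tfrac{\deg\rho_2-\chi_{\rho_2}(\iota_1)}{4},\qquad b=\tfrac{\deg\rho_2-\chi_{\rho_2}(\iota_2)}{4}.
\]
Computing with $\chi_{\pi_{\st}}(\iota_1)=n-2$ and $\sgn(\iota_1)=-1$ gives $a=1$. The same computation as in Step 2 gives $b=1$.

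\emph{Step 4: independence.} Suppose $\alpha v^3+\beta vu+\gamma\Sq^1u$ restricts to $0$ in $\HH^3(S_4)$.
\begin{itemize}
\item Restricting to $E_{(2)}$ and using Step 2 gives $\gamma=0$.
\item Restricting to $E_{\bf 1}$ then gives, in the polynomial ring $(\Z/2\Z)[v_1,v_2]$,
\[
\alpha(v_1+v_2)^3+\beta(v_1+v_2)\bigl(v_1^2+v_1v_2+v_2^2\bigr)=0 .
\]
Comparing the coefficients of $v_1^3$ and of $v_1^2v_2$ gives $\alpha+\beta=0$ and $\alpha=0$, so $\alpha=\beta=0$.
\end{itemize}
Thus the three images are independent, $\dim\mc W=3$, and $\Res$ is injective on $\mc W$.

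The main obstacle is Step 3: verifying that $u|_{E_{\bf 1}}$ has a nonzero $v_1v_2$-coefficient, i.e.\ is not a multiple of $(v_1+v_2)^2$. If it were, then $vu$ and $v^3$ would restrict dependently, and the argument would need $\HH^3(S_6)$ detection on $E_{(1,1,1)}$ and $E_{(2,1)}$ instead. The character values of $\pi_{\st}\oplus\sgn$ at $\iota_1$ and $\iota_2$ settle this, as computed above.
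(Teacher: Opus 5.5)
Your proof is correct and is essentially the paper's argument. It uses the same spanning set $\{w_1(\sgn)^3,\ w_1(\sgn)\,w_2(\rho_2),\ \Sq^1 w_2(\rho_2)\}$ for $\mc W$, and checks independence of the images first on $E_{(2)}$ (which kills the $\Sq^1$ coefficient) and then on $E_{\bf 1}$. The only difference is cosmetic. You obtain the restrictions of $w_2(\rho_2)$ from \eqref{w2ellip} and the $w_2^{\bf 1}$ formula of Section \ref{w1and2}, whereas the paper reads them off directly from the nontrivial characters of $\rho_2|_E$. Neither version proves that $w_1(\sgn)^2$ and $w_2(\rho_2)$ form a basis of $\HH^2(S_6)$, but those same restrictions, together with $\dim \HH^2(S_6)=2$, supply it.
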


\begin{proof}

Let $u = w_1(\sgn) \in \HH^1(S_6)$. Choose the following basis for $\HH^2(S_6)$: $$a = u^2, \quad b= w_2(\rho_2),$$ where $\rho_2 = \pi_{\st} \oplus \sgn$.  
Write
$\HH^*(E_{\bf 1})=\F_2[x,y]$ and $\HH^*(E_{(2)})=\F_2[r,s]$.

The restrictions of $u$ are $u|_{E_{\bf 1}}=x+y$ and $u|_{E_{(2)}}=0$. As for $b$, on
$E_{\bf 1}$ the nontrivial characters in $\rho_2$ are $x,y,x+y$, hence
$$b|_{E_{\bf 1}}=xy+x(x+y)+y(x+y)=x^2+xy+y^2.$$ On $E_{(2)}$, the sign character is
trivial and the nontrivial characters in $\rho_2$ are $r,s,r+s$, hence
$$b|_{E_{(2)}}=rs+r(r+s)+s(r+s)=r^2+rs+s^2.$$

We have
$\Sq^1(a)=\Sq^1(u^2)=0$ and
$$\Sq^1(b)|_{E_{(2)}}=\Sq^1(r^2+rs+s^2)=r^2s+rs^2\neq0,$$
so that 
$$\im(\Sq^1:\HH^2(S_6)\to \HH^3(S_6))=\langle\Sq^1(b)\rangle.$$
Also
$$\HH^1(S_6)\cdot \HH^2(S_6)=u\HH^2(S_6)=\langle ua,ub\rangle=\langle u^3,ub\rangle.$$
Thus
$$\mc W=\langle u^3,ub,\Sq^1(b)\rangle.$$

The restrictions are:
\begin{center}
\begin{tabular}{c|cc}
 & $E_{\bf 1}$ & $E_{(2)}$ \\
\hline
$u^3$ & $(x+y)^3$ & $0$ \\
$ub$ & $(x+y)(x^2+xy+y^2)=x^3+y^3$ & $0$ \\
$\Sq^1b$ & $x^2y+xy^2$ & $r^2s+rs^2$
\end{tabular}
\end{center}

Now suppose $\Res(\alpha u^3+\beta ub+\gamma\Sq^1b)=0$. Restriction to
$E_{(2)}$ gives $\gamma(r^2s+rs^2)=0$, so $\gamma=0$, and restriction to $E_{\bf 1}$
  gives $\alpha(x+y)^3+\beta(x^3+y^3)=0$. Since $(x+y)^3=x^3+x^2y+xy^2+y^3$
and $x^3+y^3$ are independent in $\F_2[x,y]$, we deduce $\alpha=\beta=0$.
Hence $\Res$ is injective on $\mc W$.
\end{proof}

Hence it suffices to compute the restriction of $w_3(\pi)$ to the EA2Gs of $S_4$. Both $E_{{\bf 1}}$ and $E_{(2)}$ have rank $2$, so we will apply Proposition \ref{machiav}.  Since the restriction of $\pi$ to $E_{(2)}$ is achiral, we have
  \beq
   \begin{split}
  w_3^{(2)}(\pi) &=\left(\sum_{e \in E} \frac{\chi_\pi(e)-\deg \pi}{4} \right)d_2(E_{(2)}) \\
  &=  \frac{\chi_\pi(\iota_2)-\deg \pi}{4}d_2(E_{(2)}).  \\
  \end{split}
  \eeq 
 
 Next, let $E=E_{\bf 1}$.
  When $\pi$ is achiral, Equation \eqref{w3.ach} gives
  \beq
  \begin{split}
  w_3^{\bf 1}(\pi) &=\frac{2 \chi(\iota_1)+\chi(\iota_2)-3 \deg \pi}{4} d_2(E_{\bf 1}) \\
   &=\frac{\deg \pi- \chi(\iota_2)}{4}d_2(E_{\bf 1}) .
  \end{split}
  \eeq

When $\pi$ is chiral, to compute $w_3=\Sq^1w_2+w_1 w_2$, we use
  $w_1^{\bf 1}(\pi)=w_1^{\bf 1}(\sgn)=v_1+v_2$ and 
  \beq
    w_2^{\bf 1}(\pi) = \frac{\deg \pi-\chi(\iota_1)-2}{4}(v_1+v_2)^2+ \frac{\deg \pi-\chi(\iota_2)}{4} v_1v_2
    \eeq
    by \eqref{canpake}.
   Since $\Sq^1$ vanishes on squares, this gives $\Sq^1(w_2^{\bf 1}(\pi)) =\frac{\deg \pi-\chi(\iota_2)}{4} d_2(E_{\bf 1})$. 
It follows that
\beq
w_3^{\bf 1}(\pi)= \frac{\deg \pi-\chi(\iota_1)-2}{4}(v_1+v_2)^3.
\eeq

\begin{thm} Let $\pi$ be a representation of $S_n$, with $n \geq 6$.  Then $w_3(\pi) \in  \HH^3(S_n)$ is the unique cohomology class in $\mc W$ which restricts to the following  in $\HH^3(E_{(2)})$ and $\HH^3(E_{\bf 1})$:
\begin{itemize}
\item $\dfrac{\deg \pi- \chi_\pi(\iota_2)}{4}d_2(E_{(2)}) \in \HH^3(E_{(2)})$,
\item $\dfrac{\deg \pi- \chi(\iota_2)}{4}d_2(E_{\bf 1}) \in \HH^3(E_{\bf 1})$ when $\pi$ is achiral,
\item $ \dfrac{\deg \pi-\chi(\iota_1)-2}{4}(v_1+v_2)^3 \in \HH^3(E_{\bf 1})$ when $\pi$ is chiral.
  \end{itemize}
  \end{thm}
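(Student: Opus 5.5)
The proof assembles the facts established just before the statement. There are three steps: locate $w_3(\pi)$ in $\mc W$, reduce to $S_4$ and then to its EA2Gs, and read off the restrictions computed in the preceding discussion.

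\emph{Step 1: $w_3(\pi)$ lies in $\mc W$ and is determined by its restriction to $S_4$.} By Theorem \ref{thm: nakaoka-stability}, for $n \geq 6$ it suffices to work with $\pi|_{S_6}$. Wu's formula \eqref{wu3} writes $w_3(\pi)=\Sq^1(w_2(\pi))+w_1(\pi)\cup w_2(\pi)$. The first term lies in the image of $\Sq^1$ on $\HH^2(S_6)$, and the second lies in $\HH^1(S_6)\cdot\HH^2(S_6)$, so $w_3(\pi) \in \mc W$. By the preceding lemma, $\Res$ is injective on $\mc W$. Hence $w_3(\pi)$ is the unique element of $\mc W$ with its given image in $\HH^3(S_4)$.

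\emph{Step 2: pass from $S_4$ to its EA2Gs.} By Theorem \ref{Es.detect} applied to $S_4$, a class in $\HH^3(S_4)$ is determined by its restrictions to $E_{\bf 1}$ and $E_{(2)}$. Combining this with Step 1, the composite $\mc W \to \HH^3(E_{(2)})\oplus\HH^3(E_{\bf 1})$ is injective. This gives the uniqueness assertion, and existence is witnessed by $w_3(\pi)$ itself. It then remains only to check that the three restriction formulas are correct.

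\emph{Step 3: verify the restrictions.} On $E_{(2)}<A_4$, the restriction of $\pi$ is achiral, so Proposition \ref{machiav} applies. The three nonidentity elements of $E_{(2)}$ are all conjugate to $\iota_2$, and the identity term contributes $0$. This yields $\frac{\deg\pi-\chi_\pi(\iota_2)}{4}d_2(E_{(2)})$. I would note that the sign of the coefficient does not matter mod $2$.

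On $E_{\bf 1}$ with $\pi$ achiral, Proposition \ref{machiav} again applies. Two nonidentity elements are conjugate to $\iota_1$ and one to $\iota_2$, so the coefficient is $\frac{2(\deg\pi-\chi(\iota_1))+(\deg\pi-\chi(\iota_2))}{4}$. Achirality means $\deg\pi-\chi(\iota_1)\equiv 0 \bmod 4$, so $\frac{2(\deg\pi-\chi(\iota_1))}{4}$ is even and drops out mod $2$.

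On $E_{\bf 1}$ with $\pi$ chiral, we have $w_1=v_1+v_2$ and $w_2$ is given by \eqref{canpake}. The term $(v_1^2+v_2^2)$ there equals $(v_1+v_2)^2$ mod $2$, so $\Sq^1$ kills it. The term $v_1v_2$ has $\Sq^1(v_1v_2)=d_2(E_{\bf 1})$. For the cup product, $(v_1+v_2)\cdot v_1v_2=d_2(E_{\bf 1})$, so this contribution cancels the $\Sq^1$ term coming from the $v_1v_2$ coefficient. What remains is $\frac{\deg\pi-\chi(\iota_1)-2}{4}(v_1+v_2)^3$.

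The main obstacle is this chiral case. Its validity hinges on \eqref{canpake}, obtained by subtracting off $\sgn$ so that $\ord \geq 1$. I would double-check that derivation: apply Theorem \ref{summer} to $\pi\oplus\sgn$, use $w(\pi)=w(\pi\oplus\sgn)(1+v_1+v_2)^{-1}$, and confirm that the degree-$2$ correction is exactly $(v_1+v_2)^2$ with coefficient $-\frac{2}{4}$ absorbed into the first numerator. The remaining verifications are routine bookkeeping.
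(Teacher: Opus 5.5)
Your proposal is correct and follows the paper's argument essentially step for step: Nakaoka plus Wu's formula place $w_3(\pi)$ in $\mc W$, and the injectivity lemma reduces to the EA2Gs of $S_4$. (The paper reads off injectivity of $\mc W\to\HH^3(E_{\bf 1})\oplus\HH^3(E_{(2)})$ directly from its restriction table, whereas you compose with Mann's detection for $S_4$, which is equivalent.) Then Proposition~\ref{machiav} handles the two achiral cases, and $\Sq^1 w_2+w_1w_2$ with \eqref{canpake} handles the chiral case; the check you flag for \eqref{canpake} does go through, since on $E_{\bf 1}$ one has $w_2(\pi)=w_2(\pi\oplus\sgn)+(v_1+v_2)^2$ and $\frac{\deg\pi-\chi(\iota_1)+2}{4}+1\equiv\frac{\deg\pi-\chi(\iota_1)-2}{4} \pmod 2$.
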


\section{Fourth SWC} \label{hereisw4}
 
The fourth SWC is detected by its restriction to $S_8$. Here we have EA2Gs  $E=E_{\bf 1}$, $E_{(2,1,1)}$, and $E_{(2,2)}$ of rank $4$ and $E_{3}$ of rank $3$.
\subsection{Case $\ord(\pi) \geq 2$}
 
 For the rank $4$ EA2Gs $E$ we have
 \beq
 w_4^E(\pi)= \sum_e \frac{\deg \pi - \chi_\pi(e)}{8} d_e,
 \eeq
and by   Proposition \ref{elliptic.swc.here}, we have
\beq
w_4^{E_3}(\pi)=\frac{\deg \pi - \chi_\pi(\iota_{4})}{8} d_{E}.
\eeq
 
 \subsection{Case $\ord(\pi)=1$} 
 Put $\pi^+=\pi \oplus \rho_1^{a_1} \oplus \rho_2^{a_2}$ as in Proposition \ref{eps-rho}. 
Since $\ord(\pi^+) \geq 2$, the above formulas compute $w_4(\pi^+)$. Moreover,
 \beq
 w(\pi^+)=w(\pi)(1+a_1 w_1(\sgn)^2)(1+a_2w_2(\rho_2)+a_2 w_3(\rho_2)+a_2w_4(\rho_2)+ \cdots),
 \eeq
 and this gives
 \begin{equation} \label{w4.computed}
 w_4(\pi)=w_4(\pi^+)+w_2(\pi) (a_1 w_1(\sgn)^2 +a_2 w_2(\rho_2))+a_2w_4(\rho_2)+a_1a_2 w_1(\sgn)^2w_2(\rho_2).
 \end{equation}

 We now compute $w_4^E(\pi)$ for each EA2G.

\subsubsection{$E=E_{\bf 1}$} Let $\mc E_k$ be the elementary symmetric polynomials in the $v_i$.
We have $w_1^{\bf 1}(\sgn)=v_1+v_2+v_3+v_4=\mc E_1$, and $w^{\bf 1}(\pi_{\st})=(1+v_1)(1+v_2) \cdots (1+v_4)=1+ \mc E_1+ \mc E_2+\mc E_3 + \mc E_4$. Of course, $w^{\bf 1}(\rho_2)=(1+\mc E_1)(1+\mc E_1+ \cdots + \mc E_4)$.

 Hence \eqref{w4.computed} gives
 \beq
 w_4^{\bf 1}(\pi)=w_4^{\bf 1}(\pi^+)+w_2^{\bf 1}(\pi)((a_1+a_2)\mc E_1^2+a_2\mc E_2) \\
+ a_2 \mc E_4+a_2 \mc E_1 \mc E_3+ a_1a_2 \mc E_1^4 +a_1a_2 \mc E_1^2 \mc E_2. \\
    \eeq

\subsubsection{	$E=E_{3}$}
 Next, the restriction of $\sgn$ to $E=E_{3}$ is trivial, so $w_1^{E}(\sgn)=0$ and $w^{E}(\rho_2)=w^{E}(\pi_{\st})$. Moreover, the restriction of $\pi_{\st}$ to $E_{3}$ is the regular representation, so $w^{E}(\pi_{\st})$ is the Dickson product $1+d_E$ plus higher order terms. 
 Thus  \eqref{w4.computed} gives
 \beq
 w_4^{E}(\pi)=w_4^{E}(\pi^+)+a_2d_{E_{3}}.
 \eeq
 
 \subsubsection{$E=E_{(2,2)}$}
 The restriction of $\sgn$ to  $E_{(2,2)}$ is also trivial, so as before  $w_1^{(2,2)}(\sgn)=0$ and $w^{(2,2)}(\rho_2)=w^{(2,2)}(\pi_{\st})$.
Moreover, $\pi_{\st}$ restricted to $E_{(2,2)}$ is the product of the regular representations of the factors $E_{(2)}$; this gives  
 \beq
 w_2^{(2,2)}(\rho_2)=v_1^2+v_1v_2+v_2^2+v_3^2+v_3v_4+v_4^2
 \eeq
and
 \beq
 w_4^{(2,2)}(\rho_2)=(v_1^2+v_1v_2+v_2^2)(v_3^2+v_3v_4+v_4^2).
 \eeq
  Thus  \eqref{w4.computed} gives
  \beq
   w_4^{(2,2)}(\pi)=w_4^{(2,2)}(\pi^+)+a_2w_2^{(2,2)}(\pi)(v_1^2+v_1v_2+v_2^2+v_3^2+v_3v_4+v_4^2)+a_2 (v_1^2+v_1v_2+v_2^2)(v_3^2+v_3v_4+v_4^2).
   \eeq

 \subsubsection{$E=E_{(2,1,1)}$}
 
 Let $E'$ be the $E_2$ factor of $E$, and $E''$ the $E_1 \times E_1$ factor, so that $E=E' \oplus E''$.

  We have $w_1^E(\sgn)=v_3+v_4$ and $w^{E}(\pi_{\st}) = \mc D(E')(1+v_3)(1+v_4)$.
 So $w^E(\rho_2)= \mc D(E')(1+v_3)(1+v_4)(1+v_3+v_4)=\mc D(E') \mc D(E'')$.
  Whence $w^E_2(\rho_2)=d_1(E')+d_1(E'')$ and $w^E_4(\rho_2)=d_1(E')d_1(E'')$.
  (Recall that $d_1(E')=v_1^2+v_1v_2+v_2^2$ and $d_1(E'')=v_3^2+v_3v_4+v_4^2$.)

 Now  \eqref{w4.computed} gives
 \beq
 \begin{split}
 w_4^E(\pi) &=w_4^E(\pi^+)+w_2^E(\pi) (a_1 (v_3^2+v_4^2) +a_2 (d_1(E')+d_1(E'')))\\
 & +a_2 d_1(E')d_1(E'') +a_1a_2(v_3^2+v_4^2)(d_1(E')+d_1(E'')).\\
 \end{split}
\eeq

 \subsection{Chiral Case}
 For completeness, we address the case of $\pi$ chiral, i.e., that $\ord(\pi)=0$. Put $\pi'=\pi \oplus \sgn$; then $w_4(\pi)=w_4(\pi')+ w_1(\sgn) \cup w_3(\pi)$.
Since  $\ord(\pi') \geq 1$, we may compute $w_4(\pi')$ as in  the previous section. Moreover $w_3(\pi)$ was computed in Section \ref{sec:w3}. So we have in principle computed $w_4(\pi)$ in all cases.

\section{Defect of a Representation}

Given an orthogonal representation $\pi$ of a finite group $G$, we define the \emph{defect} $\delta(\pi)$ of $G$ as
\beq
\delta(\pi)=\max \{ i \mid \deg(\pi)-i<j \Rightarrow w_j(\pi)=0\}.
\eeq
In other words, when the defect is positive,  
\beq
0=w_{\topp}(\pi)= w_{\deg \pi-1}(\pi)=\cdots =w_{\deg(\pi)-\delta(\pi)+1}(\pi),
\eeq
 but $w_{\deg(\pi) - \delta(\pi)}(\pi) \neq 0$. Note that $0 \leq \delta(\pi) \leq \deg \pi$. We see that $w(\pi)=1$ iff $\delta(\pi)=\deg(\pi)$, and the top SWC $w_{\topp}(\pi) \neq 0$ iff $\delta(\pi)=0$.
 Note that $\delta(\pi) \geq \dim V^G$; this is an equality when $G$ is an elementary abelian $2$-group.
 If $H$ is a detecting subgroup for $G$, then also $\delta(\pi) \geq \dim V^H$.
This applies, for instance, when $H$ is a $2$-Sylow subgroup.

Now let $G$ be $S_n$ and $P_n$ a $2$-Sylow of $G$.   Law-Okitani \cite[Theorem C]{law2021plethysms} proved that \beq
\lim_{n\to\infty}\frac{\#\{\lambda\vdash n :  \left(V_\lambda \right)^{P_n}   \neq 0\}}{p(n)}= 1.
\eeq
When $V^{P_n} \neq 0$, then $\dim V^{P_n} \geq 1$, so $\delta(\pi) \neq 0$. Hence:
\begin{prop} \label{top.swc.usually}
\[
\lim_{n\to\infty}\frac{\#\{\lambda\vdash n : w_{\topp}(\pi_\lambda)=0\}}{p(n)}=1.
\]
\end{prop}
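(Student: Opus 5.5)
The plan is to deduce the statement from the inequality $\delta(\pi)\geq \dim V^{H}$ for a detecting subgroup $H$, applied with $H=P_n$, combined with the theorem of Law--Okitani. The key point is the following claim. If $\pi$ is a representation of $S_n$ on $V$ and $V^{P_n}\neq 0$, then $w_{\topp}(\pi)=0$.

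To prove the claim, I first recall that restriction $\HH^*(S_n)\to \HH^*(P_n)$ is injective, since $[S_n:P_n]$ is odd. This follows from the transfer: $\mathrm{tr}\circ\Res$ is multiplication by the index, which is $1$ mod $2$. So it suffices to show $w_{\topp}^{P_n}(\pi)=0$. Next I decompose $\pi|_{P_n}=\mb 1^{\oplus m}\oplus \pi'$ with $m=\dim V^{P_n}\geq 1$. The Whitney sum formula then gives
\beq
w^{P_n}(\pi)=w(\mb 1)^m\, w(\pi')=w(\pi').
\eeq
Since $\deg \pi'=\deg\pi-m<\deg \pi$, we get $w_{\deg\pi}(\pi')=0$. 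Hence $w^{P_n}_{\topp}(\pi)=0$, and therefore $w_{\topp}(\pi)=0$.

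The same argument gives $w_j(\pi)=0$ for all $j>\deg\pi-m$, which is exactly the inequality $\delta(\pi)\geq \dim V^{P_n}$ noted in the text. The only slightly nontrivial input is the injectivity of restriction to the Sylow subgroup, and that is standard.

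Finally, the claim gives the inclusion
\beq
\{\la\vdash n : (V_\la)^{P_n}\neq 0\}\subseteq\{\la\vdash n: w_{\topp}(\pi_\la)=0\}.
\eeq
Dividing by $p(n)$ and applying \cite[Theorem C]{law2021plethysms}, the left-hand proportion tends to $1$. Since the right-hand proportion is bounded above by $1$, it also tends to $1$.
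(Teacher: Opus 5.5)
Your proposal is correct and follows the paper's argument. The paper likewise uses the fact that the $2$-Sylow subgroup $P_n$ detects cohomology to get $\delta(\pi)\geq \dim V^{P_n}$, and then applies Law--Okitani's Theorem C. You have just written out the details the paper leaves implicit: the transfer argument for injectivity, and the Whitney-sum step splitting off the trivial summands.
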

  
   For a positive integer $n$, put
 \beq
 n'=\begin{cases}
 n & \text{if $n$ is even} \\
 n-1 & \text{if $n$ is odd.} \\
 \end{cases}
 \eeq
 For $G=S_n$, we have 
 \beq
 \begin{split}
 \delta(\pi) &=\min(\delta(\pi|_{E_{\bf d}}) \mid 2^{\bf d}=n') \\
 &= \min(\dim V^{E_{\bf d}}  \mid 2^{\bf d}=n'). \\
 \end{split}
 \eeq
Let $\delta^{\bf d}=\delta(\pi|_{E_{\bf d}})$.
 
 \begin{example} 
Consider the standard representation $(\pi_{\st},V_{\st})$ of $S_n$ by permutation matrices.  
First suppose $n=2^k$, and let $E=E_k$ as in Section \ref{EA2Gs.sn}. Then $\dim V^{E_k}=1$.
Hence $\HH^{n-1}(S_n) \ni w_{n-1}(\pi_{\st}) \neq 0$.

For  $n$ even, let $E=E_{\bf d}$ with  $2^{\bf d}=n$. Let $r(\bf d)$ be the length of $\bf d$.
Then $\dim V_{\st}^{E_{\bf d}}=r(\bf d)$. Writing $\nu(n)$ for the number of $1$s in the binary expansion of $n$, we have $r({\bf d}) \geq \nu(n)$. 
It follows that $\delta^{\bf d}(\pi_{\st}) \geq \nu(n)$. Moreover if we take ${\bf d}_0$ to be the exponents of $2$ in the binary expansion of $n$, then $r({\bf d}_0)=\nu(n)$, hence
  $\delta(\pi_{\st})=\nu(n)$. For example, \(w_{n-1}(\pi_{\mathrm{st}})=0\)  when \(n\) is not a power of \(2\).
\end{example}

\bigskip

Some of this generalizes to hook partitions. Recall that $\pi_{\st}$ is the sum of $\pi_\la$ with the trivial one-dimensional representation, where $\la$ is the hook $(n-1,1)$.
 
Let $P_n$ be a $2$-Sylow of $S_n$. We write \(H(n)\) for the set of hook partitions of \(n\), so
\(H(n)=\{(n-x,1^x): 0\le x\le n-1\}\).
\begin{theorem}[Giannelli-Volpato]\label{thm:GV-hook-trivial}
Let  $\lambda=(n-x,1^x)\in H(n)$. Then
 \begin{equation} \label{GV.eqn}
 \dim   V_\la^{P_n} =  \binom{\nu(n)-1}{x},
  \end{equation}
hence $\delta(\pi_{\la}) \geq  \binom{\nu(n)-1}{x}$.
\end{theorem}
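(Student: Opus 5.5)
The statement has two parts: the dimension formula \eqref{GV.eqn}, which is attributed to Giannelli--Volpato, and the consequence for the defect. For the formula I would argue directly, since hooks have a clean description. Write $V_{\st}=\mb 1\oplus V_{(n-1,1)}$. The hook representation is $V_{(n-x,1^x)}\cong \Lambda^x V_{(n-1,1)}$, and so $\Lambda^x V_{\st}\cong \Lambda^x V_{(n-1,1)}\oplus \Lambda^{x-1}V_{(n-1,1)}$. Taking $P_n$-fixed points and inducting on $x$, it is enough to compute $\dim(\Lambda^x V_{\st})^{P_n}$ for every $x$.

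Under $P_n$, the set $\{1,\dots,n\}$ splits into orbits $O_1,\dots,O_\nu$ of sizes $2^{a_1},\dots,2^{a_\nu}$, where the $a_j$ are the binary digits of $n$ and $\nu=\nu(n)$. The exterior power $\Lambda^x V_{\st}$ is a signed permutation module: it has basis $e_S$ for $x$-subsets $S$, and $g e_S=\pm e_{gS}$. Its fixed space therefore has a basis indexed by the $P_n$-orbits of subsets $S$ whose stabilizer acts on $e_S$ with trivial sign. The key lemma I would prove is the following. Let $T\subseteq O_j$ be a nonempty subset with $T\neq O_j$, or with $T=O_j$ and $|O_j|\ge 2$. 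Then the stabilizer in the iterated wreath product $P_{2^{a}}$ contains an element that induces an odd permutation of $T$. The argument uses the two halves of the wreath product and a transposition-like element at the bottom level. The exception is $|O_j|=1$, and for $O_j=2^{a}$ with $a\ge1$ one must check carefully whether the full-orbit case survives. This sign computation is the step I expect to be the main obstacle, and I would settle it by induction on $a$.

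Granting the lemma, the surviving fixed vectors come from unions of those orbits $O_j$ that "count". This yields $\dim(\Lambda^x V_{\st})^{P_n}=\binom{\nu}{x}$, or a close variant of it. Pascal's rule $\binom{\nu}{x}=\binom{\nu-1}{x}+\binom{\nu-1}{x-1}$, together with the induction above, then gives $\dim V_\la^{P_n}=\binom{\nu-1}{x}$. If the sign analysis turns out more delicate, the fallback is simply to cite Giannelli--Volpato for \eqref{GV.eqn}, as the attribution suggests.

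The inequality is then formal. A $2$-Sylow subgroup $P_n$ is a detecting subgroup for $\HH^*(S_n)$: restriction to it is injective, because the index is odd. The remark preceding Proposition \ref{top.swc.usually} shows $\delta(\pi)\ge \dim V^H$ for a detecting subgroup $H$. To see this, note that $\pi|_{P_n}=\pi'\oplus (\dim V^{P_n})\mb 1$. Hence $w_j^{P_n}(\pi)=0$ for $j>\deg\pi-\dim V^{P_n}$, and by detection $w_j(\pi)=0$ as well. Combining this with \eqref{GV.eqn} gives $\delta(\pi_\la)\ge\binom{\nu(n)-1}{x}$.
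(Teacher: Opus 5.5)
Your deduction of the defect bound matches the paper's ``earlier discussion''. Restriction to $P_n$ is injective because the index is odd, and $\pi|_{P_n}$ contains $\dim V^{P_n}$ copies of $\mb 1$. Your fallback of citing Giannelli--Volpato for \eqref{GV.eqn} is exactly what the paper does.

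The direct route, however, fails as written, because your key lemma is false. A singleton $T=\{p\}\subseteq O_j$ can only be permuted trivially, so its stabilizer always acts on $e_T$ with sign $+1$, however large $O_j$ is. The subsets that actually die are those with $|T|\ge 2$, including the full orbit when $|O_j|\ge 2$. Taken literally, your lemma leaves only $T=\emptyset$ and one-point orbits, so the surviving $S$ would be unions of trivial orbits. You would then get $\dim V_{\st}^{P_n}\le 1$, whereas $V_{\st}^{P_n}$ has dimension equal to the number of $P_n$-orbits, namely $\nu(n)$. So the ``unions of orbits'' picture is the wrong one, and no variant of it produces $\binom{\nu}{x}$.

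The correct lemma is the reverse: $T\subseteq O_j$ contributes a fixed vector iff $|T|\le 1$. For $|T|\ge 2$, view $P_{2^a}$ as the automorphism group of the rooted binary tree with leaf set $O_j$.
\begin{itemize}
\item Let $v$ be a deepest vertex whose leaf set contains two points of $T$. Then each child of $v$ contains exactly one point of $T$, say $p$ and $q$.
\item There is an involution in $P_{2^a}$ that exchanges the two child subtrees, sends $p\leftrightarrow q$, and fixes every leaf outside $v$.
\item This involution stabilizes $T$ and acts on it as the transposition $(p\,q)$, which is odd.
\end{itemize}
This element need not live at the bottom level: for $T=\{1,3\}$ in $D_8$ it is $(13)(24)$.

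Since $P_{2^a}$ is transitive on $O_j$, the surviving $P_n$-orbits of $x$-subsets correspond to choices of $x$ of the $\nu$ orbits, each met in one point. This gives $\dim(\Lambda^x V_{\st})^{P_n}=\binom{\nu}{x}$, and your Pascal induction then yields $\binom{\nu-1}{x}$. With this correction you have a self-contained elementary proof of the hook case for the trivial character. The paper's citation is shorter and rests on Giannelli--Volpato's more general results for arbitrary linear characters of $P_n$.
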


\begin{proof}
Equation \eqref{GV.eqn} is the specialization of \cite[Theorem~4.4]{GiannelliVolpato2024} and \cite[Theorem~3.2]{GiannelliVolpato2024} to the trivial
linear character, and the rest follows from earlier discussion. 
\end{proof}

  \section{Alternating Groups} \label{section: alternating}
   
We   conclude with  analogues to Theorem \ref{thm1} and Proposition \ref{top.swc.usually} for the alternating groups $A_n$. 
In this section,   all representations are \emph{complex}, unless otherwise specified. In particular $\Irr(A_n)$ will denote the irreducible complex representations of $A_n$.

\subsection{Orthogonal Representations}
Let $G$ be a finite group. Typically, some work is necessary to toggle between complex and real representations.   It is convenient, instead, to define SWCs of orthogonal representations. 
We say a representation $(\pi,V)$ of $G$ is \emph{orthogonal}, when there is a nondegenerate $G$-invariant symmetric bilinear form on $V$. Let $O\Pi(G)$ be the (equivalence classes of) orthogonal representations of $G$.

Orthogonal representations correspond neatly to real representations.
We say a complex representation $\pi$ is \emph{orthogonally irreducible}, provided $\pi$ is orthogonal, and $\pi$ does not decompose into a direct sum of two \emph{orthogonal} representations. Thus, an orthogonal representation $\pi$ is orthogonally irreducible iff one of the following holds:
\begin{enumerate}
\item 
$\pi$ is irreducible,
\item 
$\pi$ is of the form  $\varphi\oplus \varphi^{\vee}$ where $\varphi$ is irreducible but not orthogonal.
\end{enumerate}
We write `OIR' for ``orthogonally irreducible representation", and `IOR' for ``irreducible orthogonal representation'', meaning of the first type.

\subsection{Probability}
  For a finite subset $\mathcal S \subset O\Pi(G)$, 
we define $\epsilon_k(\mathcal S)$ to be the probability that $w_k(\pi) \neq 0$ given $\pi \in \mathcal S$. In other words,
\beq
\epsilon_k(\mathcal S)=\frac{ \#\{ \pi \in \mathcal S \mid w_k(\pi) \neq 0\}}{|\mathcal S|}.
\eeq

Theorem  \ref{thm1} says that $\epsilon_k(\Irr(S_n)) \to 0$ as $n \to \infty$. Similarly, let $\epsilon_{\topp}(\mc S)$ be the probability that $w_{\topp}(\pi) \neq 0$ given $\pi \in \mc S$.

We will use the following lemma a few times; its proof is elementary.
\begin{lemma} \label{alt.problemma} Suppose we have a sequence of groups $G_n$, and for each $n$ we have a nonempty finite subset $X_n \subset O\Pi(G_n)$, and a subset $Y_n \subseteq X_n$ with $|X_n| \sim |Y_n|$ as $n \to \infty$.
 Then as $n \to \infty$, we have $\epsilon_k(X_n) \to 0 \Leftrightarrow \epsilon_k(Y_n) \to 0$, and $\epsilon_{\topp}(X_n) \to 0 \Leftrightarrow \epsilon_{\topp}(Y_n) \to 0$.
\end{lemma}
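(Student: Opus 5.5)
The plan is to prove the $\epsilon_k$ statement first and then run the identical argument for $\epsilon_{\topp}$, since the two conditions are proved the same way. Write $A_n=\#\{\pi\in X_n : w_k(\pi)\neq 0\}$ and $B_n=\#\{\pi\in Y_n : w_k(\pi)\neq 0\}$. Because $Y_n\subseteq X_n$, we have $B_n\le A_n$. Moreover, every element of $X_n\setminus Y_n$ contributes at most one to $A_n$, so
\[
A_n \le B_n + |X_n|-|Y_n|.
\]
Note also that $|Y_n|\ge 1$ for large $n$, since $|Y_n|\sim|X_n|\ge 1$; so $\epsilon_k(Y_n)$ is defined eventually.

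For the forward direction, suppose $\epsilon_k(X_n)\to 0$. Then
\[
\epsilon_k(Y_n)=\frac{B_n}{|Y_n|}\le \frac{A_n}{|X_n|}\cdot\frac{|X_n|}{|Y_n|}.
\]
The second factor tends to $1$ by the hypothesis $|X_n|\sim|Y_n|$, so $\epsilon_k(Y_n)\to 0$.

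For the converse, suppose $\epsilon_k(Y_n)\to 0$. Then
\[
\epsilon_k(X_n)=\frac{A_n}{|X_n|}\le \frac{B_n}{|Y_n|}\cdot\frac{|Y_n|}{|X_n|}+\Bigl(1-\frac{|Y_n|}{|X_n|}\Bigr).
\]
Both terms tend to $0$, since $|Y_n|/|X_n|\to 1$.

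The top-SWC case is verbatim the same, with the condition $w_k(\pi)\neq 0$ replaced by $w_{\topp}(\pi)\neq 0$. Nothing in the argument uses the structure of $w_k$; only the counting matters. I expect no genuine obstacle here. The only care needed is to note that $|Y_n|$ is eventually nonzero, and that $|X_n|\sim|Y_n|$ means the ratio tends to $1$.
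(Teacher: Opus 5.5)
Your proof is correct. The bounds
\[
B_n \le A_n \le B_n + |X_n| - |Y_n|,
\]
together with $|Y_n|/|X_n| \to 1$, give both directions, and the same argument works for any property of $\pi$, including $w_{\topp}(\pi) \neq 0$. The paper omits the proof with the remark that it is elementary, and your counting argument is the natural elementary proof it has in mind.
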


\subsection{Representations of $A_n$}
We quickly recall material from the representation theory of $A_n$; a suitable reference is \cite[Section 4.6]{APB}.
By reflecting the Young diagram of a partition $\la \vdash n$ along the diagonal, we obtain another partition $\la' \vdash n$.
Let  $\Irr(S_n)^{SC}$ be the $\pi_\la$ with $\la = \la'$. Write $s(n) = \#\{ \la \vdash n \mid \la =\la'\}$ for its cardinality; 
From \cite[Section $5.1$]{markabacus} we know $s(n)$ is asymptotically the square root of $p(n)$, and in particular $s(n)/p(n) \to 0$ as $n \to \infty$.
Let $\Irr(S_n)^*$ be the $\pi_\la$ with $\la \neq \la'$. 
Restriction gives a map $\Irr(S_n)^* \to \Irr(A_n)$; if we write $\Irr(A_n)^{*}$ for its image, then $\Irr(S_n)^* \to \Irr(A_n)^*$ is $2$-to-$1$, the fibres being doubletons of the 
form $\{\la,\la'\}$.
The restriction of a $\pi \in \Irr(S_n)^{SC}$ to $A_n$ decomposes into two nonisomorphic irreducible constituents. The constituents are either both orthogonal, or both not orthogonal.
Say $\pi \in \Irr(S_n)^O$ when they are both  orthogonal, and $\pi \in \Irr(S_n)^X$ otherwise.
Induction gives a $2$-to-$1$-map $\Irr(A_n)^{SC} \to \Irr(S_n)^{SC}$, which restricts to a $2$-to-$1$-map $\Irr(A_n)^O \to \Irr(S_n)^O$.
Finally, $\IOR(A_n)=\Irr(A_n)^* \amalg \hspace{0.1cm}  \Irr(A_n)^O$, and $\OIR(A_n)=\IOR(A_n) \amalg \{\sigma \oplus \sigma^\vee \mid \sigma \in \Irr(A_n)^X\}$.

From these relationships we deduce that all of $|\Irr(A_n)|$, $|\Irr(A_n)^*|$, $|\IOR(A_n)|$, and $|\OIR(A_n)|$ are asymptotic to $\half p(n)$ as $n \to \infty$.

\begin{thm} Fix a positive integer $k$. As $n \to \infty$:

\begin{enumerate}
\item Both $\epsilon_k(\IOR(A_n))$ and $\epsilon_k(\OIR(A_n))$ tend to $0$. 
\item Both $\epsilon_{\topp}(\IOR(A_n))$ and $\epsilon_{\topp}(\OIR(A_n))$ tend to $0$.
\end{enumerate}
\end{thm}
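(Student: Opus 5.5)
By Lemma \ref{alt.problemma} it is enough to work with the subset $\Irr(A_n)^* \subseteq \IOR(A_n) \subseteq \OIR(A_n)$. All three sets have size asymptotic to $\half p(n)$, so showing $\epsilon_k(\Irr(A_n)^*)\to 0$ and $\epsilon_{\topp}(\Irr(A_n)^*)\to 0$ gives the statement for $\IOR(A_n)$ and $\OIR(A_n)$ at once. Each $\sigma\in \Irr(A_n)^*$ is the restriction $\pi_\la|_{A_n}$ for exactly two partitions $\la,\la'$ with $\la\neq\la'$. Since $\pi_\la$ is realized over $\R$, its restriction is orthogonal, and $w_k(\sigma)=\Res^{S_n}_{A_n} w_k(\pi_\la)$ by naturality of SWCs under restriction.

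\textbf{Degree $k$.} If $w_k(\pi_\la)=0$, then $w_k(\sigma)=0$. So the number of $\sigma\in\Irr(A_n)^*$ with $w_k(\sigma)\neq 0$ is at most the number of $\la\vdash n$ with $w_k(\pi_\la)\neq 0$, since each bad $\sigma$ has both preimages bad and the map is $2$-to-$1$. That number is $o(p(n))$ by Theorem \ref{thm1}. Dividing by $|\Irr(A_n)^*|\sim \half p(n)$ gives $\epsilon_k(\Irr(A_n)^*)\to 0$.

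\textbf{Top class.} The same counting applies because $\deg\sigma=\deg\pi_\la$, so $w_{\topp}(\sigma)=\Res\, w_{\topp}(\pi_\la)$. Then Proposition \ref{top.swc.usually} gives that $w_{\topp}(\pi_\la)=0$ for all but $o(p(n))$ partitions, which yields $\epsilon_{\topp}(\Irr(A_n)^*)\to 0$.

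An alternative route for the top class avoids restriction from $S_n$. A $2$-Sylow $P_n\cap A_n$ of $A_n$ fixes $V_\la^{P_n}$, so $\delta(\sigma)\geq \dim V^{P_n\cap A_n}>0$ by the defect discussion. Either route works.

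The main point needing care is the bookkeeping. The sets $\Irr(S_n)^{SC}$ and the pairs $\sigma\oplus\sigma^\vee$ have size $O(s(n))=o(p(n))$, and this is exactly what Lemma \ref{alt.problemma} absorbs. The remaining check, that the restriction of a real representation is again orthogonal with SWCs given by restriction, is standard. I expect no serious obstacle; the only place to be careful is not double counting $\la$ and $\la'$.
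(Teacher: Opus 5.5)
Your proposal is correct and follows essentially the same route as the paper. You reduce to $\Irr(A_n)^*$ via Lemma \ref{alt.problemma}, restrict from $S_n$ to $A_n$, and use the $2$-to-$1$ correspondence with $\Irr(S_n)^*$ to bound the proportion of nonvanishing $w_k$ (resp.\ $w_{\topp}$) by Theorem \ref{thm1} (resp.\ Proposition \ref{top.swc.usually}). The differences are cosmetic: you bound directly against all $\la \vdash n$ instead of first passing to $\Irr(S_n)^*$, and your alternative Sylow/defect argument for the top class is also valid.
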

\begin{proof}
Since $\epsilon_k(\Irr(S_n)) \to 0$ as $n \to \infty$, it follows that $\epsilon_k(\Irr(S_n)^*) \to 0$ as well by Lemma  \ref{alt.problemma}.
If $w_k(\pi)=0$ for $\pi \in \Irr(S_n)^*$, then also $w_k(\pi|_{A_n})=0$. Therefore
\beq
\begin{split}
\epsilon_k(\Irr(A_n)^*) &= \frac{ \#\{ \sigma \in \Irr(A_n)^* \mid w_k(\sigma) \neq 0\}}{|\Irr(A_n)^*|} \\
                &= \frac{  \#\{ \pi \in \Irr(S_n)^* \mid w_k(\pi|_{A_n}) \neq 0\}}{|\Irr(S_n)^*|} \\
                & \leq \epsilon_k(\Irr(S_n)^*).\\
                \end{split}
                \eeq

By the asymptotics mentioned above, we deduce the first statement from Lemma \ref{alt.problemma}, and the second statement is similar.
\end{proof}

\bibliographystyle{alpha}
\bibliography{mybib} 
   
 \end{document}